\newtheorem{prop}{Proposition}
\theoremstyle{remark}
\newtheorem{conj}{Conjecture}
\newcommand\ds{\displaystyle}
\newcommand\ts{\textstyle}
\newcommand\hstrut[1]{\rule{#1}{0ex}}
\newcommand\vstrut[1]{\rule{0ex}{#1}}
\newcommand\lineeq[4]{\begin{tabular}{r@{ }l}$#1$&$=#2$\\$#3$&$=#4$\end{tabular}}
\newcommand\intersects[2]{\begin{tabular}{c}$#1$\\$#2$\end{tabular}}
\newcommand\bE{\mathbf{E}}
\newcommand\bL{\mathbf{L}}
\newcommand\bM{\mathbf{M}}
\newcommand\bN{\mathbf{N}}
\newcommand\bR{\mathbf{R}}
\newcommand\bS{\mathbf{S}}
\newcommand\bff{\mathbf{f}}
\newcommand\bm{\mathbf{m}}
\newcommand\bn{\mathbf{n}}
\newcommand\bq{\mathbf{q}}
\newcommand\br{\mathbf{r}}
\newcommand\fF{{\fG_S}}
\newcommand\fG{\mathfrak{G}}
\newcommand\bbR{\mathbb{R}}
\newcommand\bbZ{\mathbb{Z}}
\newcommand\0{\hat 0}
\renewcommand{\phi}{\varphi}                 
\renewcommand{\epsilon}{\varepsilon}
\newcommand\eset{\emptyset}                  
\renewcommand\emptyset{\varnothing}
\renewcommand\mod[1]{\ (\operatorname{mod}#1)}
\newcommand\inv{^{-1}}
\newcommand\codim{\operatorname{codim}}
\newcommand\vol{\operatorname{vol}}
\newcommand\cH{\mathcal{H}}
\newcommand\cI{\mathcal{I}}
\newcommand\cL{\mathcal{L}}
\newcommand\cS{\mathcal{S}}
\newcommand\PH{{P,\cH}}		
\newcommand\PoH{{P^\circ,\cH}}	
\newcommand\lcm{\operatorname{lcm}}
\newcommand\hM{{m}}
\newcommand\hS{{s}}
\newcommand\hL{{l}}
\newcommand\bhL{\mathbf{l}}
\newcommand\hbL{\mathbf{l}}
\newcommand\hR{r}
\newcommand\hbR{\mathbf{r}}
\renewcommand\aa{{\mathrm{a}}}
\newcommand\cc{{\mathrm{c}}}
\newcommand\ma{\mathrm{ma}}
\newcommand\mc{\mathrm{mc}}
\newcommand\ml{{\mathrm{ml}}}
\newcommand\mla{{\mathrm{mla}}}
\newcommand\mlc{{\mathrm{mlc}}}
\newcommand\sss{{\mathrm{s}}}
\begin{document}

\title{Six Little Squares and How Their Numbers Grow}

\author{Matthias Beck \\ San Francisco State University \\}
\address{Department of Mathematics, San Francisco State University, 1600 Holloway Avenue, San Francisco, CA 94132, U.S.A.}
\email{beck@math.sfsu.edu}
\author{Thomas Zaslavsky \\ Binghamton University (SUNY)}
\address{Department of Mathematical Sciences, Binghamton University, Binghamton, NY 13902-6000, U.S.A.}
\email{zaslav@math.binghamton.edu}

\begin{abstract}
 We find the numbers of $3 \times 3$ magic, semimagic, and magilatin squares, as functions either of the magic sum or of an upper bound on the entries in the square.  Our results on magic and semimagic squares differ from previous ones in that we require the entries in the square to be distinct from each other and we derive our results not by \emph{ad hoc} reasoning but from the general geometric and algebraic method of our paper ``An enumerative geometry for magic and magilatin labellings''.  Here we illustrate that method with a detailed analysis of $3\times3$ squares.
 \end{abstract}

\subjclass[2000]{\emph{Primary} 05B15; \emph{Secondary} 05A15, 52B20, 52C35.}

\keywords{Magic square, semimagic square, magic graph, latin square, magilatin square, lattice-point counting, rational convex polytope, arrangement of hyperplanes.}

\thanks{
The authors are grateful to an anonymous referee for several helpful suggestions.
The research of the first author was partially supported by National Science Foundation grant DMS-0810105.
The research of the second author was partially supported by National Science Foundation grant DMS-0070729 and by the SGPNR.
}

\maketitle

\setcounter{tocdepth}{4}
\tableofcontents

\section{Introduction} \label{csintro}

``Today, the study of magic squares is not regarded as a subject of mathematics, but many earlier mathematicians in China and Japan studied it.''  These words from Shigeru's history of old Japanese mathematics \cite[p.\ 435]{Shigeru} are no longer completely true.  While the construction of magic squares remains for the most part recreational, their counting has become part of the mainstream of enumerative combinatorics, as an example of quasipolynomial counting formulas and as an application of Ehrhart's theory of lattice points in polytopes.  There are several classical \cite{MacM,Esquares,LHDE} and recent \cite{ahmeddeloerahemmecke,Bmagic} mathematical works on counting something like magic squares, but without the requirement that the entries be distinct, and often omitting the diagonals. 
In previous articles \cite{IOP,MML} we established the groundwork for an enumerative theory of magic squares with distinct entries.  Here we apply those geometrical and algebraic methods to solve the problem of counting three kinds of magical $3\times3$ squares.

Each square $x=(x_{jk})_{3\times3}$ has positive integral entries that satisfy certain line-sum equations and distinctness conditions.  In a \emph{weakly semimagic square} every row and column sum is the same (their common value is called the \emph{magic sum}); in a \emph{weakly magic square} each of the two diagonals also adds up to the magic sum.  Such squares have been studied before (see, e.g., Beck et al.\ \cite{Bmagic} and Stanley \cite{EC1}); the difference here is that we count \emph{strongly} magic or semimagic squares, where all entries of the square are distinct.  (Since strongly magic squares are closest to what are classically known as ``magic squares''---see the introduction to our general magic article \cite{MML}---we call strong squares simply ``magic'' or ``semimagic'' without qualification.)  The third type we count is a \emph{magilatin square}; this is a weakly semimagic square with the restriction that the entries be distinct within a row or column.  
The numbers of standard magic squares (with entries $1, 2, \dots, n^2$) and latin squares (in which each row or column has entries $1, 2, \dots, n$) are special evaluations of our counting functions.

We count the squares in two ways: by magic sum (an \emph{affine} count), and by an upper bound on the numbers in the square (a \emph{cubic} count).  Letting $N(t)$ denote the number of squares in terms of a parameter $t$ which is either the magic sum or a strict upper bound on the entries, we know by our previous work \cite{MML} that $N$ is a \emph{quasipolynomial}, that is, there are a positive integer $p$ and polynomials $N_0,N_1,\ldots,N_{p-1}$ so that
\[
N(t)\ =\ N_{t \mod{p}}(t) \ .
\]
The minimal such $p$ is the \emph{period} of $N$; the polynomials $N_0,N_1,\ldots,N_{p-1}$ are the \emph{constituents} of $N$, and $N_0$ is the \emph{principal constituent}.  Here we find an explicit list of constituents and also the explicit rational generating function $\bN(x) = \sum_{ t>0 } N(t) x^t$ (from which the quasipolynomial is easily extracted).  

Each magic and semimagic square also has an \emph{order type}, which is the arrangement of the cells in order of increasing value of their entries.  The order type is a linear ordering of the cells because all entries are distinct in these squares.  There are $9! = 362880$ possible linear orderings but only a handful are order types of squares.  Our approach finds the actual number of order types for each kind of square; it is the absolute value of the constant term of the principal constituent, that is, $|N_0(0)|$.  
(See Theorems 3.4 and 3.14 and Examples 3.11, 3.12, and 3.21 in our paper on magic labellings \cite{MML}.)  Obviously, this number will be the same for cubic and affine counts of the same kind of square.  (There is also an order type for magilatin squares, which is a linear ordering only within each row and column.  As it is not a simple permutation of the cells, we shall not discuss it any further.)

One of our purposes is to illustrate the technique of our general treatment \cite{MML}.  Another is to provide data for the further study of magic squares and their relatives; to this end we list the exact numbers of each type for small values of the parameter and also the numbers of symmetry types, reduced squares, and reduced symmetry types of each type (and we refer to the On-Line Encyclopedia of Integer Sequences (OEIS) \cite{OEIS} for the first 10,000 values of each counting sequence).  
A square is \emph{reduced} by subtracting the smallest entry from all entries; thus, the smallest entry in a reduced square is 0.  A square is \emph{normalized} by being put into a form that is unique in each symmetry class.  Clearly, the number of normalized squares, i.e., of equivalence classes under symmetry, is fundamental; and the number of reduced, normalized squares is more fundamental yet.  

There are other ways to find exact formulas.  Xin \cite{Xin} tackles $3\times3$ magic squares, counted by by magic sum, using MacMahon's partition calculus.  He gets a generating function that agrees with ours (thereby confirming both).  Stanley's idea of M\"obius inversion over the partition lattice \cite[Exercise 4.10]{EC1} is similar to ours in spirit, but it is less flexible and requires more computation.  Beck and van Herick \cite{BH} have counted $4\times4$ magic squares using the same basic geometrical setup as ours but with a more direct counting method.

Our paper is organized as follows.  Section \ref{tech} gives an outline of our theoretical and computational setup, as well as some comments on checks and feasibility.  In Section \ref{magic} we give a detailed analysis of our computations for counting magic $3\times 3$ squares.  Sections \ref{semimagic} and \ref{magilatin} contain the setup and the results of similar computations for $3\times 3$ semimagic and magilatin squares.  We conclude in Section \ref{questions} with some questions and conjectures.

We hope that these results, and still more the method, will interest both magic squares enthusiasts and mathematicians.

\section{The technique} \label{tech}

\subsection{General method} \label{genmethod}

The means by which we solve the specific examples of $3\times3$ magic, semimagic, and magilatin squares is inside-out Ehrhart theory \cite{IOP}.  That means counting the number of $1/t$-fractional points in the interior of a convex polytope $P$ that do not lie in any of a certain set $\cH$ of hyperplanes.  The number of such points is a quasipolynomial function $E^\circ_\PoH(t)$, the \emph{open Ehrhart quasipolynomial} of the open inside-out polytope $(\PoH)$.  The exact polytope and hyperplanes depend on which of the six problems it is, but we can describe the general picture.  
 First, there are the equations of magic; they determine a subspace $s$ of all $3\times3$ real matrices which we like to call the \emph{magic subspace}---though mostly we work in a smaller overall space $\bbR^d$ that results from various reductions.  Then there is the polytope $P$ of constraints, which is the intersection with $s$ of either a hypercube $[0,1]^{3^2}$ or a standard simplex $\{ x\in\bbR^{3^2} : x \geq 0,\ {\sum x_{jk}=1} \}$: the former when we impose an upper bound on the magic square entries and the latter when we predetermine the magic sum.  
The parameter $t$ is the strict upper bound in the former case (which we call \emph{cubical} due to the shape of $P$), the magic sum in the latter (which we call \emph{affine} as $P$ lies in a proper affine subspace).  Finally there are the strong magical exclusions, the hyperplanes that must be avoided in order to ensure the entries are distinct---or in the magilatin examples, as distinct as they ought to be.  These all have the form $x_{ij}=x_{kl}$. 
 The combination of $P$ and the excluded hyperplanes forms the \emph{vertices} of $(\PH)$, which are all the points of intersection of facets of $P$ and hyperplanes in $\cH$ that lie in or on the boundary of $P$.  Thus, we count as a vertex every vertex of $P$ itself, each point that is the intersection of some facets and some hyperplanes in $\cH$, and any point that is the intersection of some hyperplanes and belongs to $P$, but not intersection points that are outside $P$.  (Points of each kind do occur in our examples.)  The \emph{denominator} of $(\PH)$ is the least common denominator of all the coordinates of all the vertices of $(\PH)$.  The period of $E^\circ_\PoH(t)$ divides the denominator; this gives us a known bound on it.

This geometry might best be explained with an example.  Let us consider magic $3 \times 3$ squares, 
\[
  \left[ \begin{array}{ccc}
  x_{ 11 } & x_{ 12 } & x_{ 13 } \\
  x_{ 21 } & x_{ 22 } & x_{ 23 } \\
  x_{ 31 } & x_{ 32 } & x_{ 33 }
  \end{array} \right] \in \bbZ_{ >0 }^{3^2} .
\]
The magic subspace is
\[
  s = \left\{ 
  \left[ \begin{array}{ccc}
  x_{ 11 } & x_{ 12 } & x_{ 13 } \\
  x_{ 21 } & x_{ 22 } & x_{ 23 } \\
  x_{ 31 } & x_{ 32 } & x_{ 33 }
  \end{array} \right] \in \bbR^{3^2} : \
  \begin{array}{l}
  x_{ 11 } + x_{ 12 } + x_{ 13 } = x_{ 21 } + x_{ 22 } + x_{ 23 } \\
 = x_{ 31 } + x_{ 32 } + x_{ 33 }  = x_{ 11 } + x_{ 21 } + x_{ 31 } \\
 = x_{ 12 } + x_{ 22 } + x_{ 32 } = x_{ 13 } + x_{ 23 } + x_{ 33 } \\
 = x_{ 11 } + x_{ 22 } + x_{ 33 } = x_{ 13 } + x_{ 22 } + x_{ 31 } 
  \end{array}
  \right\} .
\]
The hyperplane arrangement $\cH$ that captures the distinctness of the entries is
\[
  \cH = \left\{ (x_{ 11 } = x_{ 12 }) \cap s ,\ (x_{ 11 } = x_{ 21 }) \cap s ,\ \dots,\ (x_{ 32 } = x_{ 33 }) \cap s \right\} \ .
\]
Finally, there are two polytopes associated to magic $3 \times 3$ squares, depending on whether we count them by an upper bound on the entries:
\[
  P_\cc = s \cap [0,1]^{3^2},
\]
or by magic sum:
\[
  P_\aa = s \cap \left\{ 
  \left[ \begin{array}{ccc}
  x_{ 11 } & x_{ 12 } & x_{ 13 } \\
  x_{ 21 } & x_{ 22 } & x_{ 23 } \\
  x_{ 31 } & x_{ 32 } & x_{ 33 }
  \end{array} \right] \in \bbR_{ \ge 0 }^{3^2} : \  
  x_{ 11 } + x_{ 12 } + x_{ 13 } = 1 \right\}  .
\]
 Our cubical counting function computes the number of magic squares all of whose entries satisfy $0 < x_{ ij } < t$, in terms of an integral parameter $t$. These squares are the lattice points in
\[
  \big( P_\cc^\circ \setminus \bigcup\cH \big) \cap \big( \tfrac 1 t \bbZ \big)^{3^2} .
\]
Our second, affine, counting function computes the number of magic squares with positive entries and magic sum $t$. These squares are the lattice points in
\[
  \big( P_\aa^\circ \setminus \bigcup\cH \big) \cap \big( \tfrac 1 t \bbZ \big)^{3^2} .
\]

 In general, the number of squares we want to count, $N(t)$, is the Ehrhart quasipolynomial $E^\circ_\PoH(t)$ of an open inside-out polytope $(\PoH)$.  We obtain the necessary Ehrhart quasipolynomials by means of the computer program {\tt LattE} \cite{LattE}.  It computes the closed Ehrhart generating function
\[
  \bE_P(x) := 1 + \sum_{t=1}^\infty E_P(t) x^t \qquad \text{ of the values } \qquad E_P(t) := \# \big( P \cap \left( \tfrac 1 t \bbZ \right)^d \big) .
\]
Counting only interior points gives the open Ehrhart quasipolynomial $E_{P^\circ}(t)$ and its generating function 
\[
  \bE_{P^\circ}(x) := \sum_{t=1}^\infty E_{P^\circ}(t) x^t .
\]
Since we want the open inside-out Ehrhart generating function $\bE^\circ_\PoH(x) = \sum_{t=1}^\infty E^\circ_\PoH(t) x^t$, we need several transformations.  One is \emph{Ehrhart reciprocity} \cite{Ehr}, which is the following identity of rational generating functions:
\begin{equation}\label{E:ehrrecipgf}
\bE_{P^\circ}(t) = (-1)^{1+\dim P}\, \bE_P(x\inv) \ .
\end{equation}
The inside-out version \cite[Equation (4.6)]{IOP} is 
\begin{equation}\label{E:recipgf}
\bE^\circ_\PoH(x) = (-1)^{1+\dim P}\, \bE_{\PH}(x\inv) \ .
\end{equation}
We need to express the inside-out generating functions in terms of ordinary Ehrhart generating functions.  To do that we take the intersection poset 
\[
\cL(\PoH) := \big\{ P^\circ \cap {\ts\bigcap} \cS : \cS \subseteq \cH \big\} \setminus \big\{ \eset \big\},
\]
which is ordered by reverse inclusion.  
Note that $\cL(\PoH)$ and $\cL(\PH)$, defined similarly but with $P$ instead of $P^\circ$, are isomorphic  posets because $\cH$ is transverse to $P$; specifically, $\cL(\PH) = \{ \bar u :  u \in \cL(\PoH) \}$, where $\bar u$ is the (topological) closure of $u$.
Now we have the M\"obius inversion formulas \cite[Equations (4.7) and (4.8)]{IOP} 
\begin{equation}\label{E:oehrhypgf}
\bE^\circ_\PoH(x) = \sum_{u\in \cL(\PoH)} \mu(\0,u)\, \bE_{u}(x)
\end{equation}
and 
\begin{equation} \label{E:ehrhypgf}
\bE_\PH(x) = \sum_{u\in \cL(\PH)} |\mu(\0,u)|\, \bE_{u}(x) 
\end{equation}
(since $\cH$ is transverse to $P$; see our general paper \cite{MML}).  Here $\mu$ is the M\"obius function of $\cL(\PoH)$ \cite{EC1}.  

Thus we begin by getting all the cross-sectional generating functions $\bE_{u}(x)$ from {\tt LattE}.  Then we either sum them by \eqref{E:ehrhypgf} and apply inside-out reciprocity \eqref{E:recipgf}, or apply ordinary reciprocity \eqref{E:ehrrecipgf} first and then sum by \eqref{E:oehrhypgf}.  (We did whichever of these seemed more convenient.)  In the semimagic and magilatin counts we need a third step because the generating functions we computed pertain to a reduced problem; those of the original problem are obtained through multiplication by another generating function.

Once we have the generating function we extract the quasipolynomial, essentially by the binomial series.  If an Ehrhart quasipolynomial $q$ of a rational convex polytope has period $p$ and degree $d$, then its generating function $\bq$ can be written as a rational function of the form
\begin{equation}\label{E:quasiratfct}
\bq (x) := \sum_{ t \ge 1 } q(t) \, x^t = 
\frac{ a_{p(d+1)} \, x^{p(d+1)} + a_{p(d+1)-1} \, x^{p(d+1)-1} + \dots + a_1 \, x }{ \left( 1 - x^p \right)^{d+1} }
\end{equation}
for some nonnegative integers $a_1, a_2, \dots, a_{p(d+1)}$.  Grouping the terms in the numerator of \eqref{E:quasiratfct} according to the residue class of the degree modulo $p$ and expanding the denominator, we get
\begin{align*}
\bq (x)
&= \frac{ \sum_{r=1}^{p} \sum_{ j=0 }^{ d } a_{ pj+r } \, x^{ pj+r } }{ \left( 1 - x^p \right)^{ d+1 } } 
= \sum_{r=1}^{p} \sum_{ k \ge 0 } \left[ \sum_{ j=0 }^{ d } a_{ pj+r } \binom{ d+k-j }{ d } \right] x^{ pk+r } \ .
\end{align*}
Hence the $r$th constituent of the quasipolynomial $q$ is
\[
  q_r(t) = \sum_{ j=0 }^{ d } a_{ pj+r } \binom{ d+\frac{t-r}{p}-j }{ d } \quad \text{ for } r = 1,\ldots,p .
\] 

\subsection{How we apply the method} \label{specificmethod}

The initial step is always to reduce the size of the problem by applying symmetry.  Each problem has a \emph{normal} form under symmetry, which is a strong square.  The number of all magic or semimagic squares is a constant multiple of the number of symmetry types, because every such square has the same symmetry group.  For magilatin squares, there are several symmetry types with symmetry groups of different sizes, so each type must be counted separately.

Semimagic and magilatin squares also have an interesting \emph{reduced} form, in which the values are shifted by a constant so that the smallest cell contains 0; and a reduced normal form; the latter two are not strong but are aids to computation.  Reduced squares are counted either by magic sum (the ``affine'' counting rule) or by the largest cell value (the ``cubic'' count).  
All reduced normal semimagic squares correspond to the same number of unreduced squares, while the different symmetry types of magilatin square give reduced normal squares whose corresponding number of unreduced squares depends on the symmetry type.

The total number of squares, $N(t)$, and the number of reduced squares, $R(t)$, are connected by a convolution identity $N(t) = \sum_s f(t-s)R(s)$, where $f$ is a periodic constant (by which we mean a quasipolynomial of degree 0; we say \emph{constant term} for the degree-0 term of a quasipolynomial, even though the ``constant term'' may vary periodically) or a linear polynomial.  Writing for the generating functions $\bN(x) := \sum_{t>0} N(t) x^t$ and similarly $\bR(x)$, and $\bff(x) := \sum_{t\geq0} f(t) x^t$, we have $\bN(x) = \bff(x)\bR(x).$  
It follows from the form of the denominator in Equation \eqref{E:quasiratfct} that the period of $N$ divides the product of the periods of $f$ and $R$.  The reduced number $R(t)$ is, in the semimagic case, a constant multiple of the number $n(t)$ of reduced, normal semimagic squares; in the magilatin case it is a sum of different multiples, depending on a symmetry group, of the number of reduced, normal magilatin squares of each different type $T$.  Each $n(t)$ is the open Ehrhart quasipolynomial $E^\circ_{Q^\circ,\cI}(t)$ of an inside-out polytope $(Q,\cI)$ which is smaller than the original polytope $P$.  

We compute $\bn(x) := \sum_{t>0} n(t) x^t$ from the Ehrhart generating functions $\bE_{u}(x)$ of the nonvoid sections $u$ of $Q^\circ$ by flats of $\cI$ through the following procedure:
\begin{enumerate}
\item  We calculate the flats and sections by hand.
\item  We feed each $u$ into the computer program {\tt LattE} \cite{LattE}, which returns the closed generating function $\bE_{\bar u}(x)$, whose constant term equals $1$ because $u$ is nonvoid and convex.
\item  With semimagic squares, by Equations \eqref{E:ehrrecipgf}--\eqref{E:ehrhypgf} we have the M\"obius-inversion formulas  
\begin{equation}
\label{E:gfmobius}
\begin{aligned}
\bn(x) = \bE^\circ_{Q^\circ,\cI}(x) 
&= \sum_{u\in\cL} \mu_\cL(\0,u) \bE_{u}(x) \\
&= (-1)^{1+\dim Q} \sum_{u\in\cL} |\mu_\cL(\0,u)|\, \bE_{\bar u}(x\inv) .
\end{aligned}
\end{equation}
where $\cL := \cL(Q^\circ,\cI)$, the intersection poset of $(Q^\circ,\cI)$.
\end{enumerate}
The procedure for magilatin squares is similar but taking account of the several types.

\subsection{Checks}\label{checks}

We check our results in a variety of ways.

The degree is the dimension of the polytope, or the number of independent variables in the magic-sum equations.

The leading coefficient is the volume of the polytope.  (The volume is normalized so that a fundamental domain in the affine space spanned by the polytope has unit volume.)  This check is also not difficult.  The volume is easy to find by hand in the magic examples.  In affine semimagic the polytope is the Birkhoff polytope $B_3$, whose volume is well known (Section \ref{sa3}).  The cubical semimagic volume is not well known but it was easy to find (Section \ref{weaksc3}).  The magilatin polytopes are the same as the semimagic ones.

The firmest verification is to compare the results of the generating function approach with those of direct enumeration.  If we count the squares individually for $t \leq t_1$ where $t_1 \geq pd$, only the correct quasipolynomial can agree with the counts (given that we know the degree $d$ and period $p$ from the geometry). 
Though $t_1 = pd$ is too large to reach in some of the examples, still we gain considerable confidence if even a smaller value of $t_1$ yields numbers that agree with those derived from the quasipolynomial or generating function.  We performed this check in each case.

\subsection{Feasibility}\label{feasibility}

Based on our solutions of the six $3\times3$ examples we believe our counting method is practical.  The calculations are simple and readily verified.  Linear algebra tells us the degree, geometry tells us the period; we obtain the generating function using the Ehrhart package {\tt LattE} \cite{LattE} and then apply reciprocity (Equation \eqref{E:ehrrecipgf} or \eqref{E:recipgf}) and M\"obius inversion (Equation \eqref{E:oehrhypgf} or \eqref{E:ehrhypgf}), and extract the constituents, all with {\tt Maple}.  The programming is not too difficult.

In the magic square problems we found the denominator by calculating the vertices of the inside-out polytope.  Then we took two different routes.  In one we applied {\tt LattE} and Equation \eqref{E:oehrhypgf}.  In the other we calculated $N(t)$ for small values of $t$ by generating all magic squares, taking enough values of $t$ that we could fit the quasipolynomial constituents to the data.  This was easy to program accurately and quick to compute, and it gave the same answer.  The programs can be found at our ``Six Little Squares'' Web site \cite{Maplefiles}.

In principle the semimagic and magilatin problems can be solved in the same two ways.  The geometrical method with M\"obius inversion gave complete answers in a few minutes of computer time after a simple hand analysis of the geometry (see Section \ref{semimagic}).  
Direct enumeration on the computer proved unwieldy (at best), especially in the affine case, where the period is largest.  A straightforward computer count of semimagic squares by magic sum (performed in {\tt Maple}---admittedly not the language of choice---on a personal computer) seemed destined to take a million years.  Switching to a count of reduced normal squares, the calculation threatened to take only a thousand years.  These programs are at our ``Six Little Squares'' Web site \cite{Maplefiles}, as is a complicated ``supernormalized formula'' that greatly speeds up affine semimagic counting (see Section \ref{sa3alt}).

\subsection{Notation}\label{notation}

We use a lot of notation.  To keep track of it we try to be reasonably systematic.
\begin{itemize}
\item[] 
\begin{itemize}
\item[$M, m$] refer to magic squares (Section \ref{magic}).
\item[$S, s$] and subscript $\sss$ refer to semimagic squares (Section \ref{semimagic}).
\item[$L, l$] and subscript $\ml$ refer to magilatin squares (Section \ref{magilatin}).
\item[$R, r$] refer to reduced squares (the minimum entry is $0$), while $M, S, L,$ et al.\ refer to ordinary squares (all positive entries).
\item[$\cc$] refers to ``cubic'' counting, by an upper bound on the entries.
\item[$\aa$] refers to ``affine'' counting, by a specified magic sum.
\item[$X$] (capital) refers to all squares of that type.
\item[$x$] (minuscule) refers to symmetry types of squares, or equivalently normalized squares.
\end{itemize}
\end{itemize}

\section{Magic squares of order 3} \label{magic}

The standard form of a magic square of order 3 is well known; it is
\begin{equation} 
\label{E:magicnormalized}
\begin{tabular}{|c|c|c|}
\hline
\hstrut{12ex}	& \hstrut{12ex}	& \hstrut{12ex}	\\
\raisebox{-5ex}{\vstrut{8ex}}
$\alpha+\gamma$		&$-\alpha-\beta+\gamma$	&$\beta+\gamma$	\\
\hline &&\\
\raisebox{-5ex}{\vstrut{8ex}}
$-\alpha+\beta+\gamma$		&$\gamma$	&$\alpha-\beta+\gamma$	\\
\hline &&\\
\raisebox{-5ex}{\vstrut{8ex}}
$-\beta+\gamma$	&$\alpha+\beta+\gamma$		&$-\alpha+\gamma$	\\
\hline
\end{tabular}
\end{equation}
where the magic sum is $s=3\gamma$.  Taking account of the 8-fold symmetry, under which we may assume the largest corner value is $\alpha+\gamma$ and the next largest is $\beta+\gamma$, and the distinctness of the values, we have $\alpha > \beta > 0$ and $\alpha \neq 2\beta$.  One must also have $\gamma > \alpha + \beta$ to ensure positivity.

In this pair of examples, the dimension of the problem is small enough that there is no advantage in working with the reduced normal form (where $\gamma=0$).

\subsection{Magic squares:  Cubical count (by upper bound)} \label{m3c}

Here we count by a strict upper bound $t$ on the permitted values; since the largest entry is $\alpha+\beta+\gamma$, the bound is $\alpha+\beta+\gamma<t$.  The number of squares with upper bound $t$ is $M_\cc(t)$.  We think of each magic square as a $t\inv$-lattice point in $P_\cc^\circ \setminus \bigcup\cH_\cc$, the (relative) interior of the inside-out polytope
\begin{align*}
P_\cc &:= \{(x,y,z) : 0 \leq y \leq x,\ x+y \leq z,\ x+y+z \leq 1 \} , \\
\cH_\cc &:= \{ h \} \text{ where } h: x=2y ,
\end{align*}
but multiplied by $t$ to make the entries integers.  Here we use normalized coordinates $x=\alpha/t$, $y=\beta/t$, and $z=\gamma/t$.  The semilattice of flats is $\cL(P_\cc^\circ,\cH_\cc) = \{ P_\cc^\circ, h \cap P_\cc^\circ \}$ with $P_\cc^\circ < h \cap P_\cc^\circ.$  The vertices are
$$
\ts
O=(0,0,0), \quad C=(0,0,1), \quad D=(\frac12,0,\frac12), \quad 
E=(\frac13,\frac16,\frac12), \quad F=(\frac14,\frac14,\frac12),
$$
of which $O, C, D, F$ are the vertices of $P_\cc$ and $O,C,E$ are those of $h \cap P_\cc$.  (Both these polytopes are simplices.)  From Equation \eqref{E:oehrhypgf}, 
\begin{align*}
\bM_\cc(x) &= 8 \bE^\circ_{P_\cc^\circ,\cH_\cc}(x) 
= 8 \big[ \bE_{P_\cc^\circ}(x) - \bE_{h \cap P_\cc^\circ}(x) \big] 
\intertext{which we evaluate by {\tt LattE} and Ehrhart reciprocity, Equation \eqref{E:ehrrecipgf}:}
&= 8 \left[ \frac{ x^8 }{ (1-x)^2 (1-x^2) (1-x^4) } - \frac{ x^8 }{ (1-x)^2 (1-x^6) } \right] \\[6pt]
&= \frac{ 8 x^{10} (2 x^2 + 1) }{ (1-x)^2 (1-x^4) (1-x^6) } \\[6pt]
&= \frac{ 8 x^{10} (2x^2+1) (x^4-x^2+1) (x^{11}+x^{10}+\cdots+x+1)^2 (x^{10}+x^8+\cdots+x^2+1) }{ (1-x^{12})^4 } \ .
\end{align*}
 From this generating function we extract the quasipolynomial
\begin{equation}\label{E:mc-constituents}
M_\cc(t) = \begin{cases}
\frac{t^3-16t^2+76t-96}{6} = 
\frac{(t-2)(t-6)(t-8)}{6}\,, &\text{if } t \equiv 0,2,6,8 \mod{12} ; \\[6pt]
\frac{t^3-16t^2+73t-58}{6} = 
\frac{(t-1)(t^2-15t+58)}{6}\,, &\text{if } t \equiv 1 \mod{12} ; \\[6pt]
\frac{t^3-16t^2+73t-102}{6} = 
\frac{(t-3)(t^2-13t+34)}{6}\,, &\text{if } t \equiv 3,11 \mod{12} ; \\[6pt]
\frac{t^3-16t^2+76t-112}{6} = 
\frac{(t-4)(t^2-12t+28)}{6}\,, &\text{if } t \equiv 4,10 \mod{12} ; \\[6pt]
\frac{t^3-16t^2+73t-90}{6} = 
\frac{(t-2)(t-5)(t-9)}{6}\,, &\text{if } t \equiv 5,9 \mod{12} ; \\[6pt]
\frac{t^3-16t^2+73t-70}{6} = 
\frac{(t-7)(t^2-9t+10)}{6}\,, &\text{if } t \equiv 7 \mod{12} ;
\end{cases}
\end{equation}
and the first few nonzero values for $t>0$:
$$
\begin{tabular}{r||c|c|c|c|c|c|c|c|c|c|c|c|c|c|c|c|c}
$t$     
&10&11&12&13&14&15&16&17&18&19&20&21&22&23&24\\
\hline \hline
$M_\cc(t)$&8&16&40&64&96&128&184&240&320&400&504&608&744&880&1056\\
\hline
$\hM_\cc(t)$&1&2&5&8&12&16&23&30&40&50&63&76&93&110&132\\
\end{tabular}
$$
The last row is the number of symmetry classes, or normal squares, i.e., $M_\cc(t)/8$.  The rows are sequences A108576 and A108577 in the OEIS \cite{OEIS}.  

The symmetry of the constituents about residue $1$ is curious.

The principal constituent is 
\[
\frac{t^3-16t^2+76t-96}{6} = \frac{(t-2)(t-6)(t-8)}{6}\, .
\]
Its unsigned constant term, 16, is the number of linear orderings of the cells that are induced by magic squares.  Thus, up to the symmetries of a magic square, there are just two order types, even allowing arbitrarily large cell values.  (The order types are illustrated in Example 3.11 of our general magic article, \cite{MML}.)

We confirmed these results by direct enumeration, counting the strongly magic squares for $t \leq 60$ \cite{Maplefiles}.

Compare this quasipolynomial to the weak quasipolynomial: 
$$
\begin{cases}
\frac{t^3 - 3t^2 + 5t - 3}{6} = 
\frac{(t - 1)(t^2 - 2t + 3)}{6}\,, &\text{if $t$ is odd}; \\[6pt]
\frac{t^3 - 3t^2 + 8t - 6}{6} = 
\frac{(t - 1)(t^2 - 2t + 6)}{6}\,, &\text{if $t$ is even};
\end{cases}
$$
with generating function
$$
\frac{ (x^2 + 2x - 1) (2x^3 - x^2 + 1) }{ (1-x)^2 (1-x^2)^2 }\ .
$$

\subsubsection{Reduced magic squares}\label{redmagicc}

A more fundamental count than the number of magic squares with an upper bound is the number of reduced squares.  Let $R_\mc(t)$ be the number of $3\times3$ reduced magic squares with maximum cell value $t$, and $r_\mc(t)$ the number of reduced symmetry types, or equivalently of normalized reduced squares with maximum $t$.  Then we have the formulas
\[
M_\cc(t) = \sum_{k=0}^{t-1} (t-1-k) R_\mc(k) \quad \text{ and } \quad m_\cc(t) = \sum_{k=0}^{t-1} (t-1-k) r_\mc(k),
\]
since every reduced square with maximum $k$ gives $t-1-k$ unreduced squares with maximum $< t$ (and positive entries) by adding $l$ to each entry where $1 \leq l \leq t-1-k$.  In terms of generating functions,
\begin{equation}\label{E:redmagic}
\bM_\cc(x) = \frac{x^2}{(1-x)^2} \bR_\mc(x) \quad \text{ and } \quad \bm_\cc(x) = \frac{x^2}{(1-x)^2} \br_\mc(x).
\end{equation}
We deduce the generating functions 
$$
\br_\mc(x) = \frac{ x^8 (2 x^2 + 1) }{ (1-x^4) (1-x^6) }
$$ 
and $\bR_\mc(x) = 8\br_\mc(x)$, and from the latter the quasipolynomial
\begin{equation}\label{E:mrc-constituents}
R_\mc(t) = \begin{cases}
\left.\begin{aligned}
&2t-16,  &&\text{if } t \equiv 0  \\
&2t-4, &&\text{if } t \equiv 2, 10  \\
&2t-8, &&\text{if } t \equiv 4, 8  \\
&2t-12, &&\text{if } t \equiv 6   
\end{aligned}\ \right\} &\text{mod}\,{12}; \\
\ 0,  \qquad \quad\ \text{if $t$ is odd} 
\end{cases}
\end{equation}
($1/8$-th of these for $r_\mc(t)$) as well as the first few nonzero values:
$$
\begin{tabular}{r||c|c|c|c|c|c|c|c|c|c|c|c|c|c|c|c|c|c|}
$t$	&8&10&12&14&16&18&20&22&24&26&28&30&32&34&36&38&40&42\\
\hline \hline
$R_\mc(t)$&8&16&8&24&24&24&32&40&32&48&48&48&56&64&56&72&72&72\\
\hline
$r_\mc(t)$&1&2&1&3&3&3&4&5&4&6&6&6&7&8&7&9&9&9\\
\end{tabular}
$$
The sequences are A174256 and A174257 in the OEIS \cite{OEIS}.  

The principal constituent is $2t-16,$ whose constant term in absolute value, 16, is the number of linear orderings of the cells that are induced by magic squares---necessarily, the same number as with $M_\cc(t)$.

We confirmed the constituents by testing them against the coefficients of the generating function for several periods.

Our way of reasoning, from all squares to reduced squares, is backward; logically, one should count reduced squares and then deduce the ordinary magic square numbers from them via Equation \eqref{E:redmagic}.  Counting magic squares is not hard enough to require that approach, but in treating semimagic and magilatin squares we follow the logical progression since then reduced squares are much easier to handle.

\subsection{Magic squares:  Affine count (by magic sum)} \label{m3a}

The number of magic squares with magic sum $t=3\gamma$ is $M_\aa(t)$.  We take the normalized coordinates $x=\alpha/t$ and $y=\beta/t$.  The inside-out polytope is 
\begin{align*}
P_\aa&:\ 0 \leq y \leq x,\ x+y \leq \tfrac13,	\\
\cH_\aa&:\ \{ h \} \text{ where } h: x=2y .
\end{align*}
The semilattice of flats is $\cL(P_\aa^\circ,\cH_\aa) = \{ P_\aa^\circ, h \cap P_\aa^\circ \}$ with $P_\aa^\circ < h \cap P_\aa^\circ.$  The vertices are 
$$
\ts
O=(0,0), \quad D=(\frac13,0), \quad E=(\frac29,\frac19), \quad F=(\frac16,\frac16),
$$
of which $O, D, F$ are the vertices of $P_\aa$ and $O, E$ are the vertices of $h\cap P_\aa$.  From Equations \eqref{E:ehrrecipgf}--\eqref{E:oehrhypgf},
\begin{align*}
\bM_\aa(x) &= 8 \bE^\circ_{P_\aa^\circ,\cH_\aa}(x) 
= 8 \big[ \bE_{P_\aa^\circ}(x) - \bE_{h \cap P_\aa^\circ}(x) \big]  \\
&= 8 \left[ \frac{ x^{12} }{ (1-x^3)^2 (1-x^6) } - \frac{ x^{12} }{ (1-x^3) (1-x^9) }  \right] \\[6pt]
&= \frac{ 8 x^{15} (2x^3+1) }{ (1-x^3) (1-x^6) (1-x^9) } \\[6pt]
&= \frac{ 8 x^{15} (2x^3+1) (x^9+1) (x^{12}+x^{6}+1) (x^{15}+x^{12}+\cdots+x^3+1) 
}{ (1-x^{18})^{3} } \ .
\end{align*}
 From this generating function we extract the quasipolynomial
\begin{equation}\label{E:ma-constituents}
M_\aa(t) = \begin{cases}
\frac{2t^2-32t+144}{9} = 
\frac{2}{9}(t^2-16t+72),  &\text{if } t \equiv 0 \mod{18} ; \\[6pt]
\frac{2t^2-32t+78}{9} = 
\frac{2}{9}(t-3)(t-13), &\text{if } t \equiv 3 \mod{18} ; \\[6pt]
\frac{2t^2-32t+120}{9} = 
\frac{2}{9}(t-6)(t-10), &\text{if } t \equiv 6 \mod{18} ; \\[6pt]
\frac{2t^2-32t+126}{9} = 
\frac{2}{9}(t-7)(t-9), &\text{if } t \equiv 9 \mod{18} ; \\[6pt]
\frac{2t^2-32t+96}{9} = 
\frac{2}{9}(t-4)(t-12), &\text{if } t \equiv 12 \mod{18} ; \\[6pt]
\frac{2t^2-32t+102}{9} = 
\frac{2}{9}(t^2-16t+51),  &\text{if } t \equiv 15 \mod{18} ; \\[6pt]
0,  &\text{if } t \not\equiv 0 \mod{3} ;
\end{cases}
\end{equation}
and the first few nonzero values for $t>0$:
$$
\begin{tabular}{r||c|c|c|c|c|c|c|c|c|c|c|c|c|c|}
$t$	&15&18&21&24&27&30&33&36&39&42&45&48&51&54\\
\hline \hline
$M_\aa(t)$&8&24&32&56&80&104&136&176&208&256&304&352&408&472\\
\hline
$\hM_\aa(t)$&1&3&4&7&10&13&17&22&26&32&38&44&51&59\\
\end{tabular}
$$
The last row is the number of symmetry classes, or normalized squares, which is $M_\aa(t)/8$.  The two sequences are A108578 and A108579 in the OEIS \cite{OEIS}.  

The principal constituent is 
\[
\frac{2t^2-32t+144}{9} = \frac{2}{9}(t^2-16t+72) ,
\]
whose constant term, 16, is the number of linear orderings of the cells that are induced by magic squares---the same number as with $M_\cc(t)$.

We verified our results by direct enumeration, counting the strong magic squares for $t \leq 72$ \cite{Maplefiles}.

Compare the magic-square quasipolynomial to the weak quasipolynomial:
\[
\begin{cases}
\frac{2t^2-6t+9}{9}\,, &\text{if } t \equiv 0 \mod{3} ; \\
0,  &\text{if } t \not\equiv 0 \mod{3} ;
\end{cases}
\]
due to MacMahon \cite[Vol.\ II, par.\ 409, p.\ 163]{MacM}, with generating function
\[
\frac{ 5x^6-2x^3+1 }{ (1-x^3)^3 } \ .
\]

\subsubsection{Reduced magic squares}\label{redmagica}

Let $R_\ma(t)$ be the number of $3\times3$ reduced magic squares with magic sum $t$, and $r_\ma(t)$ the number of reduced symmetry types, or equivalently of normalized reduced squares with magic sum $t$.  Then 
\[
M_\aa(t) = \sum_{\substack{0 < s < t \\ s \equiv t \mod{3}}} R_\ma(s) \quad \text{ and } \quad m_\aa(t) = \sum_{\substack{0 < s < t \\ s \equiv t \mod{3}}} r_\ma(s),
\]
since every reduced square with sum $s=t-3k$, where $0 < 3k \leq t-3$, gives one unreduced square with sum $t$ (and positive entries) by adding $3k$ to each entry.  In terms of generating functions,
\begin{equation*}\label{E:redmagica}
\quad \bm_\aa(x) = \frac{x^3}{1-x^3} \,  \br_\ma(x) \,;
\end{equation*}
thus, 
$$
\br_\ma(x) = \frac{ x^{12} (2x^3+1) }{ (1-x^6) (1-x^9) } \,;
$$ 
and $\bR_\ma(x) = 8\br_\ma(x)$.  The quasipolynomial is
\begin{equation}\label{E:mra-constituents}
R_\ma(t) = 8r_\ma(t) = \begin{cases}
\left.\begin{aligned}
&\tfrac43 t-16 = \tfrac43 (t-12),  &&\text{if } t \equiv 0  \\[6pt]
&\tfrac43 t-4 = \tfrac43 (t-3), &&\text{if } t \equiv 3, 15  \\[6pt]
&\tfrac43 t-8 = \tfrac43 (t-6), &&\text{if } t \equiv 6, 12  \\[6pt]
&\tfrac43 t-12 = \tfrac43 (t-9), &&\text{if } t \equiv 9   
\end{aligned}\ \right\} &\text{mod}\,{18}; \\
\\[-8pt]
\ 0,  \qquad\qquad\qquad \quad\quad \text{if } t \not\equiv 0 &\text{mod}\,{18} .
\end{cases}
\end{equation}
The initial nonzero values:
$$
\begin{tabular}{r||c|c|c|c|c|c|c|c|c|c|c|c|c|c|c|c|c|c|cl}
$t$	&12&15&18&21&24&27&30&33&36&39&42&45&48&51&54&57&60&63\\
\hline \hline
$R_\ma(t)$&8&16&8&24&24&24&32&40&32&48&48&48&56&64&56&72&72&72\\
\hline
$r_\ma(t)$&1&2&1&3&3&3&4&5&4&6&6&6&7&8&7&9&9&9\\
\end{tabular}
$$
These sequences are A174256 and A174257 in the OEIS \cite{OEIS}.  

One of the remarkable properties of magic squares of order 3 is that $R_\mc(2k) = R_\ma(3k)$.  The reason is that the middle term of a reduced $3\times3$ magic square equals $s/3$, if $s$ is the magic sum, and the largest entry is $2s/3$.  Thus, the reduced squares of cubic and affine type, allowing for the difference in parameters, are the same, and although the counts of magic squares by magic sum and by upper bound differ, the only reason is that the reduced squares are adjusted differently to get all squares.

The principal constituent $\frac43 t-16$ has constant term whose absolute value is the same as with all other reduced magic quasipolynomials.

We confirmed the constituents by comparing their values to the coefficients of the generating function for several periods.

\section{Semimagic squares of order 3} \label{semimagic}

Now we apply our approach to counting semimagic squares. 
Here is the general form of a reduced, normalized $3\times3$ semimagic square, in which the magic sum is $s = 2\alpha+2\beta+\gamma$:
\begin{equation} 
\label{E:reducednormal}
\begin{tabular}{|c|c|c|}
\hline
\hstrut{6em}	& \hstrut{6em}	& \hstrut{6em}	\\
\raisebox{-5ex}{\vstrut{8ex}}
0			&$\beta$		&$2\alpha+\beta+\gamma$	\\
\hline &&\\
\raisebox{-5ex}{\vstrut{8ex}}
$\alpha+\beta$		&$\alpha+\beta+\gamma-\delta$	&$\delta$	\\
\hline &&\\
\raisebox{-5ex}{\vstrut{8ex}}
$\alpha+\beta+\gamma$	&$\alpha+\delta$	&$\beta-\delta$	\\
\hline
\end{tabular}
\end{equation}

\begin{prop} \label{T:snsemimagic}
A reduced and normalized $3\times3$ semimagic square has the form \eqref{E:reducednormal} with the restrictions
\begin{equation} 
\label{E:reducednormrange}
\begin{aligned}  
0 &< \alpha, \beta, \gamma ;	\\
0 &< \delta < \beta ;
\end{aligned}
\end{equation}
and
\begin{align}  
\label{E:reducednormrestr}
\delta \neq \begin{cases}
  \frac{\beta-\alpha}{2} ,\ \frac{\beta}{2} ,\ \frac{\beta+\gamma}{2} ,\ \frac{\beta+\alpha+\gamma}{2} ; \\
  \beta-\alpha ,\ \alpha+\gamma ;	\\
  \gamma .
\end{cases}
\end{align}
The largest entry in the square is $w := x_{13} = 2\alpha+\beta+\gamma$.

Each reduced normal square with largest entry $w$ corresponds to exactly $72 (t-w-1)$ different magic squares with entries in the range $(0,t)$, for $0<w<t$. Each reduced normal square with magic sum $s$ corresponds to exactly $72$ different magic squares with magic sum equal to $t$, if $t \equiv s \mod{3}$, and none otherwise, for $0<s<t$.
\end{prop}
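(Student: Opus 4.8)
The plan is to prove the four assertions in turn: the shape \eqref{E:reducednormal} of a reduced normal square, its parameter range \eqref{E:reducednormrange}, the distinctness conditions \eqref{E:reducednormrestr} together with the location of the maximal cell, and the two correspondence counts.

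\emph{Form and range.} I would first check by direct substitution that \eqref{E:reducednormal} has all six line sums equal to $2\alpha+2\beta+\gamma$, so it is a semimagic square. For the converse, recall that a semimagic square has a $72$-element symmetry group, generated by the $3!$ permutations of rows, the $3!$ permutations of columns, and the transpose. Since all nine cells of a \emph{strong} square carry distinct values, this group acts freely on strong squares (a nontrivial permutation of rows or columns, with or without transposition, would carry some cell to a cell holding a different value), so each symmetry class of strong semimagic squares has exactly $72$ members. The normalization selects the unique member in which the smallest cell sits at position $(1,1)$, the first row and first column increase away from it, and $x_{12}<x_{21}$: placing the minimum at $(1,1)$ leaves the order-$8$ residual group generated by the swap of rows $\{2,3\}$, the swap of columns $\{2,3\}$, and the transpose; requiring the first row and first column to be increasing kills the two swaps and leaves only the transpose, which preserves both orderings; and $x_{12}<x_{21}$ removes the transpose. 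Being \emph{reduced} means $x_{11}=0$.

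\emph{The form, its range, and the restrictions.} Solving the six line-sum equations under $x_{11}=0$ expresses every cell as an integer combination of $\beta:=x_{12}$, $\alpha:=x_{21}-x_{12}$, $\gamma:=x_{31}-x_{21}$, and $\delta:=x_{23}$, and the outcome is exactly \eqref{E:reducednormal}, with these four parameters free. Translating the normalization inequalities gives \eqref{E:reducednormrange}: $x_{11}<x_{12}$ is $\beta>0$, $x_{12}<x_{21}$ is $\alpha>0$, $x_{21}<x_{31}$ is $\gamma>0$, and the demand that $x_{11}=0$ be strictly smallest forces $x_{23}=\delta>0$ and $x_{33}=\beta-\delta>0$, i.e.\ $0<\delta<\beta$; conversely \eqref{E:reducednormrange} makes all nine entries of \eqref{E:reducednormal} nonnegative with $x_{11}=0$ the unique minimum and the first row and column increasing. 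With \eqref{E:reducednormrange} in force I would then run through all $\binom{9}{2}=36$ pairs of cells: all but seven of the differences are sign-definite by \eqref{E:reducednormrange} and never vanish (for instance $x_{13}-x_{21}=\alpha+\gamma>0$ and $x_{31}-x_{32}=\beta+\gamma-\delta>0$ using $\delta<\beta$), while the seven exceptional pairs produce precisely the equalities $\delta=\tfrac{\beta-\alpha}{2},\ \tfrac{\beta}{2},\ \tfrac{\beta+\gamma}{2},\ \tfrac{\alpha+\beta+\gamma}{2},\ \beta-\alpha,\ \alpha+\gamma,\ \gamma$ listed in \eqref{E:reducednormrestr}; hence the square is strong exactly when $\delta$ avoids these values. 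The same sign analysis shows $x_{13}=2\alpha+\beta+\gamma$ exceeds every other cell, the only non-immediate comparisons being $x_{13}-x_{31}=\alpha>0$, $x_{13}-x_{21}=\alpha+\gamma>0$, $x_{13}-x_{32}=\alpha+\beta+\gamma-\delta>0$ (since $\delta<\beta$), and $x_{13}-x_{22}=\alpha+\delta>0$, so $w:=x_{13}$ is the largest entry.

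\emph{Correspondence counts.} Since the $72$-element group acts freely on strong squares, the $72$ symmetry images of a reduced normal square are $72$ distinct reduced strong semimagic squares, of which exactly one (the given one) is normal; so the reduced squares in one symmetry class are precisely these $72$. Un-reducing means adding a positive integer $\ell$ to every cell; the minimum becomes $\ell$ and the maximum becomes $w+\ell$, so all entries lie strictly between $0$ and $t$ iff $1\le\ell\le t-1-w$, i.e.\ there are $t-w-1$ admissible shifts when $0<w<t$. Combining, a reduced normal square with maximal cell $w$ gives rise to $72(t-w-1)$ squares with entries in $(0,t)$, and each such square arises exactly once: from a strong semimagic square with entries in $(0,t)$ one reads off uniquely its minimum (the shift), the resulting reduced square, and, by freeness, the unique symmetry carrying its normalization back to it. For the affine count, adding $\ell$ to all cells raises the magic sum from $s$ to $s+3\ell$, so a magic sum equal to $t$ with $0<s<t$ is attainable iff $t\equiv s\mod{3}$---and then only with $\ell=(t-s)/3$, which is automatically at least $1$---yielding the $72$ symmetry images and nothing more, whereas for $t\not\equiv s\mod{3}$ no shift works.

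\emph{Main obstacle.} None of this is deep; the real labor is the bookkeeping of the $36$ cell pairs---confirming that exactly the seven displayed conditions are the non-redundant distinctness requirements---together with the easy but indispensable verification (the factor $72$ everywhere rides on it) that the symmetry group of a semimagic square acts freely on strong squares.
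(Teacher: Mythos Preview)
Your proof is correct and follows essentially the same approach as the paper's: normalize via the $72$-element symmetry group to put the minimum at $(1,1)$ with first row/column increasing and $x_{12}<x_{21}$, parametrize, and then sift the $\binom{9}{2}$ cell-pairs to isolate the seven nontrivial inequations. Your treatment is in fact somewhat more explicit than the paper's---you spell out why the group acts freely on strong squares (which justifies the factor $72$) and you verify the converse direction (that \eqref{E:reducednormrange} really does make $x_{11}=0$ the unique minimum and the first row/column increasing), whereas the paper handles these points more tersely.
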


\begin{proof}
By permuting rows and columns we can arrange that $x_{11} = \min x_{ij}$ and that the top row and left column are increasing.  By flipping the square over the main diagonal we can further force $x_{21}>x_{12}$.  By subtracting the least entry from every entry we ensure that $x_{11}=0$.  Thus we account for the $72(t-w-1)$ semimagic squares that correspond to each reduced normal square.

The form of the top and left sides in \eqref{E:reducednormal} is explained by the fact that $x_{11}<x_{12}<x_{21}<x_{31}<x_{13}$.  The conditions $x_{ij}>x_{11}$ for $i,j=2,3$, together with the row-sum and column-sum equations, imply that $x_{13}$ is the largest entry and that $x_{23}<x_{12}$.  

The only possible equalities amongst the entries are ruled out by the following inequations:
\begin{align*}
x_{22} &\neq x_{12} ,\ x_{21} ,\ x_{23} ;		\\
x_{32} &\neq x_{12} ,\ x_{22} ;		\\
x_{33} &\neq x_{23} ,\ x_{32} .
\end{align*}
These correspond to the restrictions \eqref{E:reducednormrestr}:
\begin{align*}
x_{22} \neq x_{12} &\iff \delta \neq \alpha+\gamma ;		\\
x_{22} \neq x_{21} &\iff \delta \neq \gamma ;			\\
x_{22} \neq x_{23} &\iff \delta \neq \frac{\beta+\alpha+\gamma}{2} ;	\\
x_{32} \neq x_{12} &\iff \delta \neq \beta-\alpha ;		\\
x_{32} \neq x_{22} &\iff \delta \neq \beta+\gamma-\delta \iff \delta \neq \frac{\beta+\gamma}{2} ;	\\
x_{33} \neq x_{23} &\iff \delta \neq \beta-\delta \iff \delta \neq \frac{\beta}{2} ;		\\
x_{33} \neq x_{32} &\iff \delta \neq \alpha+\delta \neq \beta-\delta \iff \delta \neq \frac{\beta-\alpha}{2} . \qedhere
\end{align*}
\end{proof}

\subsection{Semimagic squares:  Cubical count (by upper bound)} \label{sc3}

We are counting squares by a strict upper bound on the allowed value of an entry; this bound is the parameter $t$.  Let $S_{\cc}(t)$, for $t>0$, be the number of semimagic squares of order 3 in which every entry belongs to the range $(0,t)$.  

\subsubsection{Counting the weak squares}
\label{weaksc3}

The polytope $P_\cc$ is the 5-dimensional intersection of $[0,1]^9$ with the semimagic subspace in which all row and column sums are equal.  This polytope is integral because it is the intersection with an integral polytope of a subspace whose constraint matrix is totally unimodular; so the quasipolynomial is a polynomial.  By contrast, the inside-out polytope $(\PH)$ for enumerating strong semimagic squares has denominator 60.  We verified by computer counts for $t \leq 18$ that the weak polynomial is
\begin{align*}
&\frac{3t^5-15t^4+35t^3-45t^2+32t-10}{10} 
= \frac{(t-1)(t^2-2t+2)(3t^2-6t+5)}{10} 
\end{align*}
with generating function
\[
\frac{ (7x^2 - 2x + 1) (2x^3 + x^2 + 4x - 1) }{ (1-x)^6 } \ .
\]
We conclude that $P_\cc$ has volume $3/10$.

\subsubsection{Reduction of the number of strong squares}
\label{sc3reduction}

We compute $S_\cc(t)$ via $R_\cc(w)$, the number of reduced squares with largest entry equal to $w$.  The formula is
\begin{equation} \label{E:sc3count}
S_\cc(t) = \sum_{w=0}^{t-1} (t-1-w) R_\cc(w) .
\end{equation}
The value $R_\cc(w)=72 r_\cc(w)$ where $r_\cc(w)$ is the number of normal reduced squares (in which we know the largest entry to be $x_{13}$).  Thus $r_\cc(s)$ counts the number of $\frac{1}{s}$-integral points in the interior of the 3-dimensional polytope $Q_\cc$ defined by
\begin{equation} \label{E:sc3reducednormpoly}
0 \leq x, y ;  \qquad  0 \leq z \leq y ;  \qquad  x + y \leq \frac12 
\end{equation}
with the seven excluded (hyper)planes
\begin{equation} \label{E:sc3arr}
 z = \frac{y-x}{2} ,\ \frac{y}{2} ,\ \frac{1-y-2x}{2} ,\ \frac{1-x-y}{2} ,\ y-x ,\ 1-x-2y ,\ 1-2x-2y \ ,
\end{equation}
the three coordinates being $x=\alpha/w$, $y=\beta/w$, and $z=\delta/w$. 

The hyperplane arrangement for reduced normal squares is that of \eqref{E:sc3arr}.  We call it $\cI_\cc$.  Thus $r_\cc(s) = E^\circ_{Q_\cc^\circ,\cI_\cc}(s)$.

\subsubsection{Geometrical analysis of the reduced normal polytope}
\label{sc3geometry}

We apply M\"obius inversion, Equation \eqref{E:oehrhypgf}, over the intersection poset $\cL(Q_\cc^\circ,\cI_\cc)$. 
We need to know not only $\cL(Q_\cc^\circ,\cI_\cc)$ but also all the vertices of $(Q_\cc,\cI_\cc)$, since they are required for computing the Ehrhart generating function and estimating the period of the reduced normal quasipolynomial.

We number the planes:
\begin{alignat*}{2}
&\pi_1: \quad&	x-y+2z &= 0 , \\
&\pi_2: & 	y-2z &= 0 , \\
&\pi_3: & 	2x+2z &= 1 , \\
&\pi_4: & 	x+2z &= 1 , \\
&\pi_5: & 	x-y+z &= 0 , \\
&\pi_6: & 	x+y+z &= 1 , \\
&\pi_7: & 	2x+y+z &= 1 .
\end{alignat*}
The intersection of two planes, $\pi_j \cap \pi_k$, is a line we call $l_{jk}$; $\pi_3 \cap \pi_5 \cap \pi_6$ is a line we also call $l_{356}$.  The intersection of three planes is, in general, a point but not usually a 
vertex of $(Q_\cc,\cI_\cc)$.

Our notation for the line segment with endpoints $X,Y$ is $XY$, while $\overline{XY}$ denotes the entire line spanned by the points.  The triangular convex hull of three noncollinear points $X,Y,Z$ is $XYZ$.  We do not need quadrilaterals, as the intersection of each plane with $Q$ is a triangle.

We need to find the intersections of the planes with $Q_\cc^\circ$, separately and in combination.  Here is a list of significant points; we shall see it is the list of vertices of $(Q_\cc,\cI_\cc)$.  The first column has the vertices of $Q_\cc$, the second the vertices of $(Q_\cc,\cI_\cc)$ that lie in open edges, the third the vertices that lie in open facets, and the last is the sole interior vertex.
\begin{equation}
\label{E:sc3snvertices}
\begin{aligned}
O&=(0,0,0) , \quad&	D_\cc&=(0,\tfrac12,\tfrac12)\in OC , \quad&	F_\cc&=(0,\tfrac23,\tfrac13)\in OBC , \quad& H_\cc&=(\tfrac15,\tfrac25,\tfrac15).\\
A&=(\tfrac12,0,0) , &	E_\cc&=(\tfrac13,\tfrac13,0)\in AB , 	&	G_\cc&=(\tfrac14,\tfrac12,\tfrac14)\in ABC , & \\
B_\cc&=(0,1,0) , &	E_\cc'&=(\tfrac13,\tfrac13,\tfrac13)\in AC , &	G_\cc'&=(\tfrac15,\tfrac35,\tfrac15)\in ABC , & \\
C_\cc&=(0,1,1) , &	E_\cc''&=(0,1,\tfrac12)\in BC , & 		G_\cc''&=(\tfrac15,\tfrac35,\tfrac25)\in ABC , &
\end{aligned}
\end{equation}
The denominator of $(Q_\cc,\cI_\cc)$ is the least common denominator of all the points; it evidently equals $4\cdot3\cdot5 = 60$.

The intersections of the planes with the edges of $Q_\cc$ are in Table \ref{Tb:planeedge}.  The subscript $\cc$ is omitted.

\begin{table}
\renewcommand\arraystretch{1.3}
\begin{tabular}{|l||c|c|c|c|c|c||c|}
\hline
Plane	&\multicolumn{6}{c||}{Intersection with edge}	&Intersection	\\
	&$OA$	&$OB$	&$OC$	&$AB$	&$AC$	&$BC$	&with $Q$	\\
\hline\hline
$\pi_1$	&$O$	&$O$	&$O$	&$E$	&$\notin Q$&$E''$&$OEE''$	\\
\hline
$\pi_2$	&$OA$	&$O$	&$O$	&$A$	&$A$	&$E''$	&$OAE''$	\\
\hline
$\pi_3$	&$A$	&$\eset$&$D$	&$A$	&$A$	&$E''$	&$ADE''$	\\
\hline
$\pi_4$	&$\notin Q$&$\eset$&$D$&$\notin Q$&$E'$&$E''$&$DE'' E'$	\\
\hline
$\pi_5$	&$O$	&$O$	&$OC$	&$E$	&$C$	&$C$	&$OCE$	\\
\hline
$\pi_6$	&$\notin Q$&$B$	&$D$	&$B$	&$E'$	&$B$	&$BD E'$	\\
\hline
$\pi_7$	&$A$	&$B$	&$D$	&$AB$	&$A$	&$B$	&$ABD$	\\
\hline
\end{tabular}
\vskip 15pt
\caption{Intersections of planes of $\cI$ with edges of $Q$.  A vertex of $Q$ contained in $\pi_j$ will show up three times in the row of $\pi_j$.  In order to clarify the geometry, we distinguish between a plane's meeting an edge line outside $Q$ and not meeting it at all (i.e., their being parallel).}
\label{Tb:planeedge}
\end{table}

Table \ref{Tb:cubiclines} shows the lines generated by pairwise intersection of planes.

\begin{table}[hb]
\renewcommand\arraystretch{1.2}
\begin{tabular}{|l||c|c|c|c|c|c|}
\hline
	&$\pi_2$&$\pi_3$&$\pi_4$&$\pi_5$&$\pi_6$&$\pi_7$	\\
\hline\hline
$\pi_1$	&\lineeq{x}{0}{y}{2z}	&\lineeq{x+z}{\frac12}{y-z}{\frac12}	&\lineeq{y}{1}{x+2z}{1}	&\lineeq{z}{0}{x}{y}	&\lineeq{z}{2y-1}{x}{2-3y}	&\lineeq{x+z}{\frac13}{y-z}{\frac13}	\\
\hline
$\pi_2$	&	&\lineeq{y}{2z}{x+z}{\frac12}	&\lineeq{y}{2z}{x+y}{1}	&\lineeq{y}{2z}{x}{z}	&\lineeq{y}{2z}{x+3z}{1}	&\lineeq{y}{2z}{2x+3z}{1}	\\
\hline
$\pi_3$	&&	&\lineeq{x}{0}{z}{\frac12}	&\multicolumn{2}{c|}{$l_{356}$:\qquad\lineeq{x+z}{\frac12}{y}{\frac12}}		&\lineeq{x+z}{\frac12}{y}{z}	\\
\hline
$\pi_4$	&&&	&\lineeq{x}{1-2z}{y}{1-z}	&\lineeq{x}{1-2z}{y}{z}	&\lineeq{x}{1-2z}{y}{3z-1}{}{}{}{}	\\
\hline
$\pi_5$	&&&&	&$l_{356}$	&\lineeq{x}{1-2y}{z}{3y-1}	\\
\hline
$\pi_6$	&&&&&	&\lineeq{x}{0}{y+z}{1}	\\
\hline
\end{tabular}
\vskip 15pt
\caption{The equations of the pairwise intersections of planes of $\cI_\cc$.}
\label{Tb:cubiclines}
\end{table}

In Table \ref{Tb:linepolytope} we describe the intersection of each line 
with $Q_\cc$ and with its interior.  The subscript $\cc$ is omitted.

\begin{table}[ht]
\renewcommand\arraystretch{1.2}
\begin{tabular}{|l||c|c|c|c|c|c|}
\hline
	&$\pi_2$&$\pi_3$&$\pi_4$&$\pi_5$&$\pi_6$&$\pi_7$	\\
\hline\hline
$\pi_1$	&\intersects{OE''}{(OBC)}	&\intersects{E''}{(BC)}	&\intersects{E''}{(BC)}	&\intersects{OE}{(OAB)}	&$FG'$	&$EF$	\\
\hline
$\pi_2$	&	&\intersects{AE''}{(ABC)}	&\intersects{E''}{(BC)}	&$OG$	&$FG$	&$AF$	\\
\hline
$\pi_3$	&&	&\intersects{DE''}{(OBC)}	&\multicolumn{2}{c|}{$l_{356}$:\quad$DG$}	&\intersects{AD}{(OAC)}	\\
\hline
$\pi_4$	&&&	&$DG''$	&\intersects{D E'}{(OAC)}	&\intersects{D}{(OC)}	\\
\hline
$\pi_5$	&\intersects{}{}&&&	&$l_{356}$	&$ED$	\\
\hline
$\pi_6$	&&&&&	&\intersects{BD}{(OBC)}	\\
\hline
\end{tabular}
\vskip 15pt
\caption{The intersections of lines with $Q$ and $Q^\circ$.  The second (parenthesized) row in each box shows the smallest face of $Q$ to which the intersection belongs, if that is not $Q$ itself; these intersections are not part of the intersection poset of $(Q^\circ,\cI)$.}
\label{Tb:linepolytope}
\end{table}

Last, we need the intersection points of three planes of $\cI_\cc$; or, of a plane and a line.  Some are not in $Q_\cc$ at all; them we can ignore.  Some are on the boundary of $Q_\cc$; they are necessary in finding the denominator, but all of them are points already listed in \eqref{E:sc3snvertices}.  It turns out that 
$$
\pi_2 \cap \pi_5 \cap \pi_7 = H_\cc 
$$
is the only vertex in $Q_\cc^\circ$, so it is the only one we need for the intersection poset.

Here, then, is the intersection poset (Figure \ref{F:s3intersections}).  The subscript $\cc$ is omitted.  In the figure, for simplicity, we write $\pi_j$, etc., when the actual element is the simplex $\pi_j \cap Q^\circ$, etc.; we also state the vertices of the simplex.  The M\"obius function $\mu(\0,u)$ equals $(-1)^{\codim u}$ with the exception of $\mu(\0,l_{356}) = 2$.

\begin{figure}[t]
\begin{center}
\psfrag{L}[c]{{\large$\cL(Q^\circ,\cI)$}}
\psfrag{Q}[c]{$\bbR^3\ (OABC)$}
\psfrag{pi1}[c]{$\pi_1\ (OEE'')$}
\psfrag{pi2}[c]{$\pi_2\ (OAE'')$}
\psfrag{pi3}[c]{$\pi_3\ (ADE'')$}
\psfrag{pi4}[c]{$\pi_4\ (DE'' E')$}
\psfrag{pi5}[c]{$\pi_5\ (OCE)$}
\psfrag{pi6}[c]{$\pi_6\ (BD E')$}
\psfrag{pi7}[c]{$\pi_7\ (ABD)$}
\psfrag{l16}[c]{$l_{16}\ (FG')$}
\psfrag{l17}[c]{$l_{17}\ (EF)$}
\psfrag{l356}[c]{$l_{356}\ (DG)$}
\psfrag{l57}[c]{$l_{57}\ (ED)$}
\psfrag{l25}[c]{$l_{25}\ (OG)$}
\psfrag{l26}[c]{$l_{26}\ (FG)$}
\psfrag{l27}[c]{$l_{27}\ (AF)$}
\psfrag{l45}[c]{$l_{45}\ (DG'')$}
\psfrag{G5}[c]{$H$}
\includegraphics{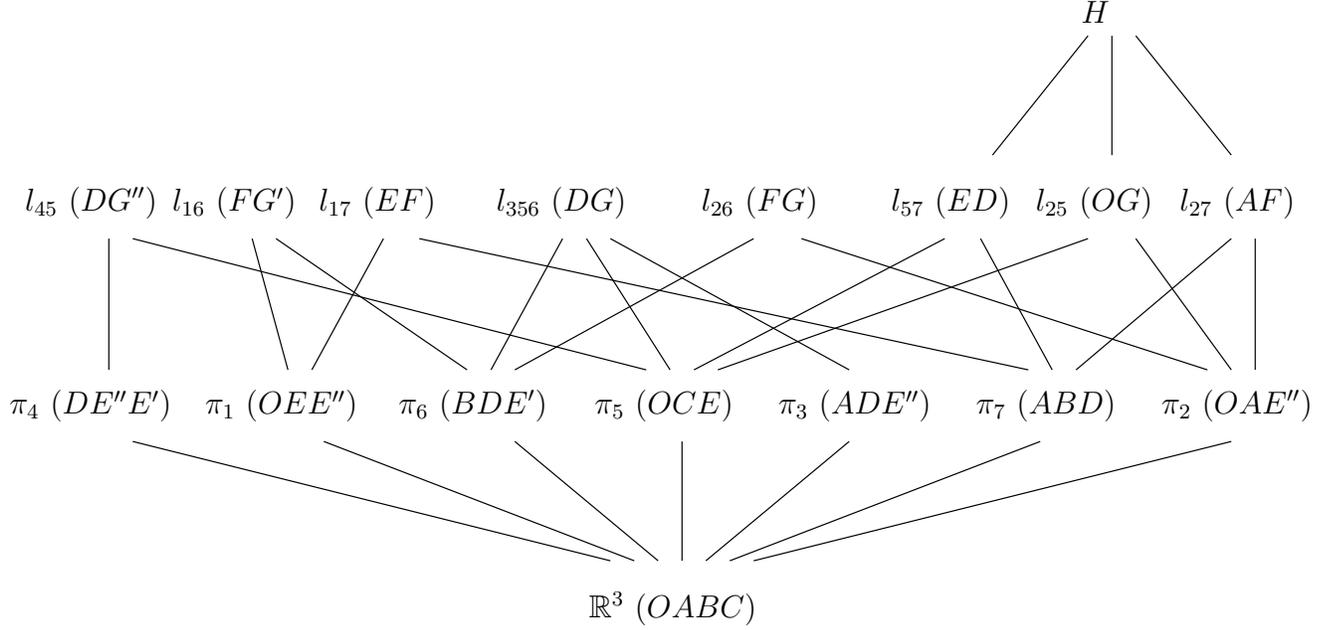}
\caption{The intersection poset $\cL(Q^\circ,\cI)$ for semimagic squares.  The diagram shows both the flats and (in parentheses) their intersections with $Q$.}
\label{F:s3intersections}
\end{center}
\end{figure}
%

\subsubsection{Generating functions and the quasipolynomial}
\label{sc3gf}

That was the first half of the work.  The second half begins with finding $r_\cc(w)=E^\circ_{(Q_\cc^\circ,\cI_\cc)}(w)$ from the Ehrhart generating functions $\bE_{u}(t)$ of the intersections by means of \eqref{E:gfmobius}. The next step, then, is to calculate all necessary generating functions.  This is done by {\tt LattE}. $\br_\cc(x)$ is the result of applying reciprocity to the sum of all these rational functions (with the appropriate M\"obius-function multiplier $-1$, excepting $\bE_{DG^\circ}(x)$ whose multiplier is $-2$). The result is:
\begin{equation}
\label{E:sc3sngf}
\begin{aligned}
-\br_\cc(1/x) =\ 
&\bE_{ OABC } (x) + \bE_{OEE''}(x) + \bE_{OAE''}(x) \\
&+ \bE_{ADE''}(x) + \bE_{DE'E''}(x) + \bE_{OCE}(x) \\
&+ \bE_{BDE'}(x) + \bE_{ABD}(x) + \bE_{FG'}(x) \\
&+ \bE_{EF}(x) + \bE_{OG}(x) + \bE_{FG}(x) \\
&+ \bE_{AF}(x) + 2\bE_{DG}(x) + \bE_{DG''}(x) \\
&+ \bE_{DE}(x) + \bE_{H}(x)	\\
 =\ &\frac{ 1 }{ (1-x)^3 (1-x^2) } 
+ \frac{1}{ (1-x) (1-x^2) (1-x^3) } 
+ \frac{1}{ (1-x) (1-x^2)^2 }	\\
&+ \frac{1}{ (1-x^2)^3 } 
+ \frac{1}{ (1-x^2)^2 (1-x^3) } 
+ \frac{1}{ (1-x)^2 (1-x^3) }	\\
&+ \frac{1}{ (1-x) (1-x^2) (1-x^3) } 
+ \frac{1}{ (1-x) (1-x^2)^2 } 
+ \frac{1}{ (1-x^3) (1-x^5) }	\\
&+ \frac{1}{ (1-x^3)^2 } 
+ \frac{1}{ (1-x) (1-x^4) } 
+ \frac{1}{ (1-x^3) (1-x^4) }	\\
&+ \frac{1}{ (1-x^2) (1-x^3) } 
+ 2\frac{1}{ (1-x^2) (1-x^4) } 
+ \frac{1}{ (1-x^2) (1-x^5) }	\\
&+ \frac{1}{ (1-x^2) (1-x^3) } 
+ \frac{1}{1-x^5} \ . 
\end{aligned}
\end{equation}
Then by \eqref{E:sc3count} the generating function for cubically counted semimagic squares is
\begin{equation}\label{E:sc3gf}
\begin{aligned}
\bS_\cc(x) &= 72 \frac{x^2}{(1-x)^2} \br_\cc(x) \\[5pt]
&= \frac{ 
72 x^{10} \left[ 18\,x^{9}+46\,x^{8}+69\,x^{7}+74\,x^{6}+65\,x^{5}+46\,x^{4}+26\,x^3+11\,x^2+4\,x+1 \right] 
}{ 
(1-x^2)^{2} (1-x^{3})^{2} (1-x^{4})  (1-x^{5}) }\ .
\end{aligned}
\end{equation}

 From the geometrical denominator 60 or the (standard-form) algebraic denominator $(1-x^{60})^{5}$ we know the period of $S_\cc(t)$ divides 60.  We compute the constituents of $S_\cc(t)$ by the method of Section \ref{genmethod}; the result is that
\begin{equation}\label{E:sc-constituents}
S_{\cc}(t) = \begin{cases}
\ds \frac{3}{10}  t^5  -\frac{75}{8}  t^4  + \frac{331}{3}  t^3  -\frac{5989}{10}  t^2  + c_1(t) t  - c_0(t),	&\text{if $t$ is even}; \\ \\
\ds \frac{3}{10}  t^5  -\frac{75}{8}  t^4  + \frac{331}{3}  t^3  -\frac{11933}{20}  t^2  + c_1(t) t  - c_0(t),	&\text{if $t$ is odd};
\end{cases}
\end{equation}
where $c_1$ varies with period 6, given by 
$$
c_1(t) = \begin{cases}
1464,		&\text{ if } t \equiv 0, 2  \\
1456,		&\text{ if } t \equiv 4  \\
\frac{2831}{2}\,,	&\text{ if } t \equiv 1  \\
\frac{2847}{2}\,,	&\text{ if } t \equiv 3, 5  \\
\end{cases} \qquad \mod{6};
$$
and $c_0$, given by Table \ref{Tb:semicubc0}, varies with period 60.  (It is curious that the even constant terms have half the period of the odd terms.)  Thus the period of $S_{\cc}$ turns out to be 60, the largest it could be.

\begin{table}[htb]
\newcommand\Strut{\rule{0ex}{2.5ex}}
\begin{tabular}{|r|c||r|c||r|c||r|c||r|c|}
\hline
\Strut $t$	& $c_0(t)$	&$t$	& $c_0(t)$	&$t$	& $c_0(t)$	&$t$	& $c_0(t)$	&$t$	& $c_0(t)$	\\[3pt]
\hline\hline
\Strut 0&$1296$			&12&$1296$		&24&$\frac{6624}{5}$	&36&$\frac{6192}{5}$	&48&$\frac{6624}{5}$	\\[3pt]
\hline
\Strut 1&$\frac{110413}{120}$	&13&$\frac{120781}{120}$&25&$\frac{23465}{24}$	&37&$\frac{23465}{24}$	&49&$\frac{120781}{120}$ \\[3pt]
\hline
\Strut 2&$\frac{3824}{3}$	&14&$\frac{19552}{15}$	&26&$\frac{18256}{15}$	&38&$\frac{19552}{15}$	&50&$\frac{3824}{3}$	\\[3pt]
\hline
\Strut 3&$\frac{47727}{40}$	&15&$\frac{9315}{8}$	&27&$\frac{9315}{8}$	&39&$\frac{47727}{40}$	&51&$\frac{44271}{40}$	\\[3pt]
\hline
\Strut 4&$\frac{18152}{15}$	&16&$\frac{16856}{15}$	&28&$\frac{18152}{15}$	&40&$\frac{3544}{3}$	&52&$\frac{3544}{3}$	\\[3pt]
\hline
\Strut 5&$\frac{25705}{24}$	&17&$\frac{25705}{24}$	&29&$\frac{131981}{120}$&41&$\frac{121613}{120}$&53&$\frac{131981}{120}$ \\[3pt]
\hline
\Strut 6&$\frac{6192}{5}$	&18&$\frac{6624}{5}$	&30&$1296$		&42&$1296$		&54&$\frac{6624}{5}$	\\[3pt]
\hline
\Strut 7&$\frac{25193}{24}$	&19&$\frac{129421}{120}$&31&$\frac{119053}{120}$&43&$\frac{129421}{120}$&55&$\frac{25193}{24}$	\\[3pt]
\hline
\Strut 8&$\frac{19552}{15}$	&20&$\frac{3824}{3}$	&32&$\frac{3824}{3}$	&44&$\frac{19552}{15}$	&56&$\frac{18256}{15}$	\\[3pt]
\hline
\Strut 9&$\frac{44847}{40}$	&21&$\frac{41391}{40}$	&33&$\frac{44847}{40}$	&45&$\frac{8739}{8}$	&57&$\frac{8739}{8}$	\\[3pt]
\hline
\Strut 10&$\frac{3544}{3}$	&22&$\frac{3544}{3}$	&34&$\frac{18152}{15}$	&46&$\frac{16856}{15}$	&58&$\frac{18152}{15}$	\\[3pt]
\hline
\Strut 11&$\frac{130253}{120}	$&23&$\frac{140621}{120}$&35&$\frac{27433}{24}$	&47&$\frac{27433}{24}$	&59&$\frac{140621}{120}$ \\[3pt]
\hline
\end{tabular}
\vspace{15pt}
\caption{Constant terms (without the negative sign) of the constituents 
of the semimagic cubical quasipolynomial $S_\cc(t)$.}
\label{Tb:semicubc0}
\end{table}

The principal constituent (for $t\equiv0$) is
$$
\frac{3}{10} t^5 - \frac{75}{8} t^4 + \frac{331}{3} t^3 - \frac{5989}{10} t^2 + 1464 t - 1296 .
$$
Its unsigned constant term, 1296, is the number of order types of semimagic squares.  Allowing for the 72 symmetries of a semimagic square, there are just 18 symmetry classes of order types.

For the first few nonzero values of $S_\cc(t)$ see the following table.  (This is sequence A173546 in the OEIS \cite{OEIS}.)  The third row is the number of normalized squares, or symmetry classes (sequence A173723), which equals $S_\cc(t)/72$.  The other lines give the numbers of reduced squares (sequence A173727) and of reduced normal squares (i.e., symmetry types of reduced squares; sequence A173724), which may be of interest.
$$
\begin{tabular}{r||c|c|c|c|c|c|c|c|c|c|c|c|c|c|c|c|}
$t$     	&8&9&10&11&12&13&14&15&16&17&18&19\\
\hline \hline
$S_\cc(t)$&0&0&72&288&936&2592&5760&11520&20952&35712&57168&88272\\
\hline
$\hS_\cc(t)$&0&0&1&4&13&36&80&160&291&496&794&1226\\
\hline
$R_\cc(t)$&72&144&432&1008&1512&2592&3672&5328&6696&9648&11736&15552\\
\hline
$r_\cc(t)$	&1&2&6&14&21&36&51&74&93&134&163&216\\
\end{tabular}
$$

Compare the strong to the weak quasipolynomial.  The leading coefficients agree and the strong coefficient of $t^4$ is constant. These facts, of which we made no use in deducing the quasipolynomial, provide additional verification of the correctness of the counts and constituents.

\subsubsection{Another method:  Direct counting}
\label{sc3alt}

We checked the constituents by directly counting (in Maple) all semimagic squares for $t \leq 100$.  The numbers agreed with those derived from the generating function and quasipolynomial above.

\subsection{Semimagic squares:  Affine count (by magic sum)} \label{sa3}

Now we count squares by magic sum: we compute $S_\aa(t)$, the number of squares with magic sum $t$.

\subsubsection{The Birkhoff polytope}\label{sa3birk}

The polytope $P$ for semimagic squares of order 3, counted by magic sum, is 4-dimensional and integral.  (It is the polytope of doubly stochastic matrices of order 3, i.e., a Birkhoff polytope \cite{BvN,BP}.) 

\subsubsection{Affine weak semimagic}\label{sa3weak}

The polytope for weak semimagic squares of order 3 is the same $P$.  

The weak quasipolynomial, or rather, polynomial, first computed by MacMahon \cite[Vol.\ II, par.\ 407, p.\ 161]{MacM}, is
$$
\frac{t^4-6t^3+15t^2-18t+8}{8} = \frac{(t-1)(t-2)(t^2-3t+4)}{8} 
$$  
with generating function
\[
\frac{ 6x^4 - 9x^3 + 10x^2 - 5x + 1 }{ (1-x)^5 } \ .
\]

\subsubsection{Reduction}\label{sa3reduction}

The count is via $R_\aa(s)$, the number of reduced squares with magic sum $s$.  The formula is
\begin{align} 
\label{E:sa3count}
S_\aa(t) &= \sum_{\substack{0 < s \leq t-3 \\ s \equiv t \mod{3}}} R_\aa(s) &\text{ if } t>0 .
\end{align}
We have $R_\aa(s)=72 r_\aa(w)$, where $r_\aa(s)$ is the number of reduced, normalized squares with magic sum $s$, equivalently the number of $\frac{1}{s}$-integral points in the interior of the 3-dimensional polytope $Q_\aa$ defined by
\begin{equation}\label{E:sa3reducednormpoly}
0 \leq x, y ;  \qquad  0 \leq z \leq y ;  \qquad  x + y \leq \frac12 
\end{equation}
with the seven excluded (hyper)planes
\begin{equation}\label{E:sa3arr}
z = \frac{y-x}{2} ,\ \frac{y}{2} ,\ \frac{1-y-2x}{2} ,\ \frac{1-x-y}{2} ,\ 
y-x ,\ 1-x-2y ,\ 1-2x-2y \ ,
\end{equation}
the three coordinates being $x=\alpha/s$, $y=\beta/s$, and $z=\delta/s$.  

The hyperplane arrangement for reduced, normalized squares is that of \eqref{E:sa3arr}.  We call it $\cI_\aa$.  Thus $r_\aa(s) = E^\circ_{Q_\aa^\circ,\cI_\aa}(s)$.

\subsubsection{The reduced, normalized weak polytopal quasipolynomial} 
\label{weaksa3}

This function simply counts $\frac1s$-lattice points in $Q_\aa^\circ$.  The counting formula is
$
\sum_\alpha \sum_\beta \sum_{\delta} 1 ,
$
summed over all triples that satisfy \eqref{E:reducednormrange}.  It simplifies to
$$
\sum_\alpha \binom{\lfloor\frac{s-1}{2}\rfloor-\alpha}{2} ,
$$
which gives the Ehrhart quasipolynomial
\begin{equation}\label{E:sa3snordinary}
E_{Q_\aa^\circ}(s) = \binom{\lfloor\frac{s-1}{2}\rfloor}{3} = \begin{cases}
\dbinom{\frac{s-1}{2}}{3} = \frac{1}{48}(s-1)(s-3)(s-5),  &\text{for odd } s ; \\ \\
\dbinom{\frac{s-2}{2}}{3} = \frac{1}{48}(s-2)(s-4)(s-6),  &\text{for even } s .
\end{cases}
\end{equation}
The leading coefficient is $\vol Q_\aa$.

We deduce from \eqref{E:sa3snordinary} that 
$$
\bE_{P^\circ}(x) = \frac{x^7(1+x)}{(1-x^2)^4}
$$
and by reciprocity that
$$
\bE_P(x) = \frac{1+x}{(1-x^2)^4} .
$$

\subsubsection{Geometrical analysis of the reduced, normalized polytope}
\label{sa3geometry}

We apply M\"obius inversion, Equation \eqref{E:gfmobius}, over the intersection poset $\cL(Q_\aa^\circ,\cI)$.

We number the planes:
\begin{alignat*}{2}
&\pi_1: \quad&	x-y+2z &= 0 , \\
&\pi_2: & 	y-2z &= 0 , \\
&\pi_3: & 	2x+y+2z &= 1 , \\
&\pi_4: & 	x+y+2z &= 1 , \\
&\pi_5: & 	x-y+z &= 0 , \\
&\pi_6: & 	x+2y+z &= 1 , \\
&\pi_7: & 	2x+2y+z &= 1 .
\end{alignat*}
The intersection of two planes, $\pi_j \cap \pi_k$, is a line we call $l_{jk}$; $\pi_3 \cap \pi_5 \cap \pi_6$ is a line we also call $l_{356}$.  The intersection of three planes is, in general, a point but not usually a vertex of $(Q_\aa,\cI_\aa)$. Our geometrical notation is as in the cubical analysis.

We need to find the intersections of the planes with $Q_\aa^\circ$, separately and in combination.  Here is a list of significant points; we shall see it is the list of vertices of $(Q_\aa,\cI_\aa)$.  The first column has the vertices of $Q_\aa$, the second the vertices of $(Q_\aa,\cI_\aa)$ that lie in open edges, the third the vertices that lie in open facets, and the last is the sole interior vertex.
\begin{equation}\label{E:sn3vertices}
\begin{aligned}
O&=(0,0,0), \quad &		D_\aa&=(0,\tfrac13,\tfrac13)\in OC, \quad&	F_\aa&=(0,\tfrac25,\tfrac15)\in OBC, \ \ & H_\aa&=(\tfrac17,\tfrac27,\tfrac17),\\
A&=(\tfrac12,0,0), &		E_\aa&=(\tfrac14,\tfrac14,0)\in AB, & 		G_\aa&=(\tfrac16,\tfrac26,\tfrac16)\in ABC, \\
B_\aa&=(0,\tfrac12,0), &		E_\aa'&=(\tfrac14,\tfrac14,\tfrac14)\in AC, &	G_\aa'&=(\tfrac18,\tfrac38,\tfrac18)\in ABC, \\
C_\aa&=(0,\tfrac12,\tfrac12), &	E_\aa''&=(0,\tfrac12,\tfrac14)\in BC, &		G_\aa''&=(\tfrac18,\tfrac38,\tfrac28)\in ABC.
\end{aligned}
\end{equation}
The least common denominator of $O,A,B_\aa,C_\aa$ explains the period 2 of $E_{Q_\aa^\circ}$.  The denominator of $(Q_\aa,\cI_\aa)$ is the least common denominator of all the points; it evidently equals 
$8\cdot3\cdot5\cdot7 = 840$.

The intersections of the planes with the edges of $Q_\aa$ are in Table \ref{Tb:planeedge}.  Table \ref{Tb:affinelines} shows the lines generated by pairwise intersection of planes.  Table \ref{Tb:linepolytope} describes the intersection of each line with $Q_\aa$ and with $Q_\aa^\circ$.

\begin{table}[h]
\renewcommand\arraystretch{1.3}
\begin{tabular}{|l||c|c|c|c|c|c|}
\hline
	&$\pi_2$&$\pi_3$&$\pi_4$&$\pi_5$&$\pi_6$&$\pi_7$	\\
\hline\hline
$\pi_1$	&\lineeq{x}{0}{y}{2z}	&\lineeq{x}{\tfrac{1-4z}3}{y}{\tfrac{1+2z}3}	&\lineeq{x+2z}{\frac12}{y}{\frac12}	&\lineeq{x}{y}{z}{0}	&\lineeq{x}{2-5y}{z}{3y-1}	&\lineeq{x}{\tfrac{1-5z}4}{y}{\tfrac{1+3z}4}	\\
\hline
$\pi_2$	&	&\lineeq{x+y}{\frac12}{y}{2z}	&\lineeq{x}{1-2y}{y}{2z}	&\lineeq{x}{z}{y}{2z}	&\lineeq{x}{1-5z}{y}{2z}	&\lineeq{x}{\frac{1-5z}2}{y}{2z}	\\
\hline
$\pi_3$	&&	&\lineeq{x}{0}{y}{1-2z}	&\multicolumn{2}{c|}{$l_{356}$:\qquad\lineeq{x+z}{\frac13}{y}{\frac13}}		&\lineeq{2x+3y}{1}{z}{y}	\\
\hline
$\pi_4$	&&&	&\lineeq{x}{3y-1}{z}{1-2y}	&\lineeq{x}{1-3y}{z}{y}	&\lineeq{x+y}{\frac13}{z}{\frac13}	\\
\hline
$\pi_5$	&&&&	&$l_{356}$	&\lineeq{x}{1-3y}{z}{4y-1}	\\
\hline
$\pi_6$	&&&&&	&\lineeq{x}{0}{z}{1-2y}	\\
\hline
\end{tabular}
\vskip 15pt
\caption{The equations of the pairwise intersections of planes of $\cI_\aa$.}
\label{Tb:affinelines}
\end{table}

Last, we need the intersection points of three planes of $\cI_\aa$; or, of a plane and a line.  Some are not in $Q_\aa$ at all; them we can ignore.  Some are on the boundary of $Q_\aa$; they are necessary in finding the denominator, but all of them are points already listed in \eqref{E:sn3vertices}.  It turns out that
$$
\pi_2 \cap \pi_5 \cap \pi_7 = H_\aa 
$$
is the only vertex in $Q_\aa^\circ$, so it is the only one we need for the intersection poset.

The combinatorial structure and the intersection poset (Figure \ref{F:s3intersections}) for the affine count are identical to those for the cubical count.  The reason is that the affine polytope $P_\aa$ is the 4-dimensional section of $P_\cc$ by the flat in which the magic sum equals 1, and this flat is orthogonal to the line of intersection of the whole arrangement $\cH_\aa$.

\subsubsection{Generating functions and the quasipolynomial}
\label{sa3gf}

The second half of the affine solution is to find $\br_\aa(s)=\bE^\circ_{(Q_\aa^\circ,\cI_\aa)}(s)$ by applying Equations \eqref{E:ehrrecipgf}--\eqref{E:ehrhypgf} after finding the Ehrhart generating functions $\bE_{u}(s)$ for $u \in \cL(Q_\aa^\circ,\cI_\aa)$. The next step, then, is to calculate those generating functions.  This is done by {\tt LattE}.  Then $(-1)^3 \br_\aa(x\inv)$ is the sum of all these rational functions; that is,
\allowdisplaybreaks
\begin{equation}
\label{E:sa3sngfsum}
\begin{aligned}
- \br_\aa(1/x) =\ &\bE_{OABC}(x) + \bE_{OEE''}(x) + \bE_{OAE''}(x) \\
&+ \bE_{ADE''}(x) + \bE_{DE'' E'}(x) + \bE_{OC E}(x) \\
&+ \bE_{B D E'}(x) + \bE_{AB D}(x) + \bE_{F G'}(x) \\
&+ \bE_{E F}(x) + \bE_{OG}(x) + \bE_{F G}(x) \\
&+ \bE_{AF}(x) + 2 \bE_{D G}(x) + \bE_{D G''}(x) \\
&+ \bE_{D E}(x) + \bE_{H}(x) \\
=\ &\frac{1}{(1-x) (1-x^2)^3} + \frac{1}{(1-x) (1-x^4)^2} + \frac{1}{(1-x) (1-x^2) (1-x^4)} \\
&+ \frac{1}{(1-x^2) (1-x^3) (1-x^4)} + \frac{1}{(1-x^3) (1-x^4)^2} + \frac{1}{(1-x) (1-x^2) (1-x^4)} \\
&+ \frac{1}{(1-x^2) (1-x^3) (1-x^4)} + \frac{1}{(1-x^2)^2 (1-x^3)} + \frac{1}{(1-x^5) (1-x^8)} \\
&+ \frac{1}{(1-x^4) (1-x^5)} + \frac{1}{(1-x) (1-x^6)} + \frac{1}{(1-x^5) (1-x^6)} \\
&+ \frac{1}{(1-x^2) (1-x^5)} + 2 \frac{1}{(1-x^3) (1-x^6)} + \frac{1}{(1-x^3) (1-x^8)} \\
&+ \frac{1}{(1-x^3) (1-x^4)} + \frac{1}{1-x^7} \ .
\end{aligned}
\end{equation}
The generating function for the affine count of semimagic squares, by \eqref{E:sa3count}, is
\begin{equation}
\label{E:sa3gf}
\begin{aligned}
 &\quad \bS_\aa(x) = 72 \, \frac{x^3}{1-x^3} \, \br_\aa(x) = \\[5pt]
 &\frac{ 
 72x^{15}\, \left\{  \begin{aligned}
18x^{21}&+5x^{20}+15x^{19}+11x^{17}-8x^{16}+x^{15}-23x^{14}-13x^{13}-22x^{12}-9x^{11}\\
&-16x^{10}+x^{9}-3x^{8}+7x^{7}+7x^{6}+9x^{5}+7x^{4}+6x^{3}+4x^{2}+2x+1
 \end{aligned} \right\}
 }{ 
 (1-x^3)^2 (1-x^4) (1-x^5) (1-x^6) (1-x^7) (1-x^8)
 } \ .
\end{aligned}
\end{equation}

 From the geometrical or generating-function denominator we know that the period of $S_\aa(t)$ divides $840=\lcm(3,4,6,7,8)$.  This is long, but it can be simplified.  The factor 7 in the period is due to a single term in \eqref{E:sa3sngfsum}.  If we treat it separately we have $\br_\aa$ as a sum of the $H$-term $x^7/(1-x^7)$ and a ``truncated'' generating function for $\br_\aa(x) + \frac{x^7}{1-x^7}$, and a corresponding truncated expression 
\begin{align*}
&\bS_\aa(x) - 72 \frac{x^{10}}{(1-x^3)(1-x^7)} =  \\[5pt]
&\quad \frac{ 
-72x^{10}\, \left\{  \begin{aligned}
17x^{19}&+5x^{18}+15x^{17}+x^{16}+12x^{15}-7x^{14}+2x^{13}-7x^{12}\\
&-8x^{11}-9x^{10}-9x^{9}-6x^{8}-6x^{7}-x^{6}+x^{4}+x^{3}-1
\end{aligned} \right\}
}{ 
(1-x^3)^2 (1-x^4) (1-x^5) (1-x^6) (1-x^8)
} \ .
\end{align*}
We extract the constituents from this expression as in Section \ref{genmethod}, separately for the two parts of the generating function.  The constituents are all of the form
\begin{equation}\label{E:sa-constituents}
S_{\aa}(t) = \frac{1}{8} t^4 - \frac{9}{2} t^3 + a_2(t) t^2 - a_1(t) t + a_0(t) - 72 S_7(t) ,
\end{equation}
where $S_7(t)$ is a correction, to be defined in a moment, and 
\[
a_2(t) = \begin{cases}
\frac{243}{4}\,,  &\text{if } t \equiv 0  \\ 
\frac{218}{4}\,,  &\text{if } t \equiv 1, 5 \\ 
\frac{227}{4}\,,  &\text{if } t \equiv 2, 4 \\ 
\frac{234}{4}\,,  &\text{if } t \equiv 3 
\end{cases} \qquad \mod{6} ;
\]
\[
a_1(t) = \begin{cases}
\frac{1968}{5}\,,  &\text{if } t \equiv 0 \\  
\frac{1158}{5}\,,  &\text{if } t \equiv 1,5 \\
\frac{1383}{5}\,,  &\text{if } t \equiv 2,10 \\
\frac{1653}{5}\,,  &\text{if } t \equiv 3 \\
\frac{1428}{5}\,,  &\text{if } t \equiv 4,8 \\
\frac{1923}{5}\,,  &\text{if } t \equiv 6 \\
\frac{1113}{5}\,,  &\text{if } t \equiv 7,11 \\
\frac{1698}{5}\,,  &\text{if } t \equiv 9 
\end{cases} \qquad \mod{12} ;
\]
and $a_0(t)$ is given in Table \ref{Tb:semiaffa0}.  

We call the constituents of the quasipolynomial 
\[
S_{\aa}(t) + 72 S_7(t) = \frac{1}{8} t^4 - \frac{9}{2} t^3 + a_2(t) t^2 - a_1(t) t + a_0(t)
\]
the \emph{truncated constituents} of $S_\aa(t)$, since they correspond to the truncated generating function mentioned just above.  
The $S_7$ term that undoes the truncation is 
\begin{align*}
S_7(t) &:= \left\lfloor\frac{t-1}{21}\right\rfloor + \begin{cases}
1, &\text{if } t \equiv 10, 13, 16, 17, 19, 20 \mod{21}; \\
0, &\text{otherwise}
\end{cases}	\\
&\,= \frac{t - \bar t}{21} + s_7(t) ,
\end{align*}
where $\bar t :=$ the least positive residue of $t$ modulo 7 and
$$
s_7(t) := \begin{cases} 
1, &\text{if } t \equiv 10, 13, 16, 17, 19, 20 \mod{21}; \\
 0,      &\text{otherwise}.
\end{cases}
$$
Note that $\bar{t} = 21$ if $t \equiv 0$, so that $S_7(0) = -1$ and in general $S_7(21k) = k-1$.

\begin{table}[htb]
\newcommand\Strut{\rule{0ex}{2.5ex}}
\begin{tabular}{|r|c||r|c||r|c||r|c||r|c||r|c|}

\hline
\Strut $t$	& $a_0(t)$	&$t$	& $a_0(t)$	&$t$	& $a_0(t)$	&$t$	& $a_0(t)$	&$t$	& $a_0(t)$	&$t$	& $a_0(t)$	\\[3pt]
\hline\hline
\Strut 0&$1224$		&20	&$524$		&40	&$584$		&60	&$1188$		&80	&$560$		&100	&$548$	\\[3pt]
\hline
\Strut 1&$\frac{7259}{40}$	&21&$\frac{31419}{40}$	&41&$\frac{6299}{40}$	&61&$\frac{8699}{40}$ 	&81&$\frac{29979}{40}$	&101&$\frac{7739}{40}$	\\[3pt]
\hline
\Strut 2&$\frac{1801}{5}$	&22&$\frac{1741}{5}$	&42&$\frac{5121}{5}$	&62&$\frac{1621}{5}$	&82&$\frac{1921}{5}$	&102&$\frac{4941}{5}$	\\[3pt]
\hline
\Strut 3&$\frac{23067}{40}$	&23&$\frac{827}{40}$	&43&$\frac{347}{40}$	&63&$\frac{24507}{40}$	&83&$\frac{-613}{40}$	&103&$\frac{1787}{40}$	\\[3pt]
\hline
\Strut 4&$\frac{2452}{5}$	&24&$\frac{5832}{5}$	&44&$\frac{2332}{5}$	&64&$\frac{2632}{5}$	&84&$\frac{5652}{5}$	&104&$\frac{2512}{5}$	\\[3pt]
\hline
\Strut 5&$\frac{2239}{8}$	&25&$\frac{2143}{8}$	&45&$\frac{6975}{8}$	&65&$\frac{1951}{8}$	&85&$\frac{2431}{8}$	&105&$\frac{6687}{8}$ 	\\[3pt]
\hline
\Strut 6&$\frac{4653}{5}$	&26&$\frac{1513}{5}$	&46&$\frac{1453}{5}$	&66&$\frac{4833}{5}$	&86&$\frac{1333}{5}$	&106&$\frac{1633}{5}$	\\[3pt]
\hline
\Strut 7&$\frac{5243}{40}$	&27&$\frac{26523}{40}$	&47&$\frac{4283}{40}$	&67&$\frac{3803}{40}$	&87&$\frac{27963}{40}$	&107&$\frac{2843}{40}$	\\[3pt]
\hline
\Strut 8&$\frac{2224}{5}$	&28&$\frac{2164}{5}$	&48&$\frac{5544}{5}$	&68&$\frac{2044}{5}$	&88&$\frac{2344}{5}$	&108&$\frac{5364}{5}$	\\[3pt]
\hline
\Strut 9&$\frac{31131}{40}$	&29&$\frac{8891}{40}$	&49&$\frac{8411}{40}$	&69&$\frac{32571}{40}$	&89&$\frac{7451}{40}$	&109&$\frac{9851}{40}$	\\[3pt]
\hline
\Strut 10&$413$		&30&$1017$		&50&$389$		&70&$377$		&90&$1053$		&110&$353$	\\[3pt]
\hline
\Strut 11&$\frac{539}{40}$	&31&$\frac{2939}{40}$	&51&$\frac{24219}{40}$	&71&$\frac{1979}{40}$	&91&$\frac{1499}{40}$	&111&$\frac{25659}{40}$ \\[3pt]
\hline
\Strut 12&$\frac{5796}{5}$	&32&$\frac{2656}{5}$	&52&$\frac{2596}{5}$	&72&$\frac{5976}{5}$	&92&$\frac{2476}{5}$	&112&$\frac{2776}{5}$	\\[3pt]
\hline
\Strut 13&$\frac{7547}{40}$	&33&$\frac{28827}{40}$	&53&$\frac{6587}{40}$	&73&$\frac{6107}{40}$	&93&$\frac{30267}{40}$	&113&$\frac{5147}{40}$ 	\\[3pt]
\hline
\Strut 14&$\frac{1477}{5}$	&34&$\frac{1777}{5}$	&54&$\frac{4797}{5}$	&74&$\frac{1657}{5}$	&94&$\frac{1597}{5}$	&114&$\frac{4977}{5}$	\\[3pt]
\hline
\Strut 15&$\frac{5823}{8}$	&35&$\frac{799}{8}$		&55&$\frac{1279}{8}$	&75&$\frac{5535}{8}$	&95&$\frac{1087}{8}$	&115&$\frac{991}{8}$	\\[3pt]
\hline
\Strut 16&$\frac{2488}{5}$	&36&$\frac{5508}{5}$	&56&$\frac{2368}{5}$	&76&$\frac{2308}{5}$	&96&$\frac{5688}{5}$	&116&$\frac{2188}{5}$	\\[3pt]
\hline
\Strut 17&$\frac{8603}{40}$	&37&$\frac{11003}{40}$	&57&$\frac{32283}{40}$	&77&$\frac{10043}{40}$	&97&$\frac{9563}{40}$	&117&$\frac{33723}{40}$ \\[3pt]
\hline
\Strut 18&$\frac{4689}{5}$	&38&$\frac{1189}{5}$	&58&$\frac{1489}{5}$	&78&$\frac{4509}{5}$	&98&$\frac{1369}{5}$	&118&$\frac{1309}{5}$	\\[3pt]
\hline
\Strut 19&$\frac{2651}{40}$	&39&$\frac{26811}{40}$	&59&$\frac{1691}{40}$	&79&$\frac{4091}{40}$	&99&$\frac{25371}{40}$	&119&$\frac{3131}{40}$	\\[3pt]
\hline
\end{tabular}
\vspace{15pt}
\caption{Constant terms of the truncated constituents of $S_\aa(t)$.}
\label{Tb:semiaffa0}
\end{table}

The period of the constant term of the truncated constituents is $120$.  It follows that $S_\aa(t)$ has period $7 \cdot 840$, that is, $840$.

The principal constituent of $S_\aa(t)$ (that is, for $t \equiv 0$) is
$$
\frac{1}{8} t^4 - \frac{9}{2} t^3 + \frac{243}{4} t^2 - \frac{13896}{35} t + 1296 .
$$
(This incorporates the effect of the term $-72S_7$.)  
The constant term is the same as in the cubic count, as it is the number of order types of semimagic squares.

We give the first few nonzero values of $S_\aa(t)$ in the following table.  (This sequence is A173547 in the OEIS \cite{OEIS}.)  The third row is the number of normalized squares, or symmetry classes (sequence A173725); this is $S_\aa(t)/72$.  The last rows are the numbers of reduced squares (sequence A173728) and of reduced, normalized squares (sequence A173726) with magic sum $t$.
\[
\begin{tabular}{r||c|c|c|c|c|c|c|c|c|c|c|c|c|c|c|c|c|c|}
$t$     	&12&13&14&15&16&17&18&19&20&21&22&23&24\\
\hline \hline
$S_\aa(t)$&0&0&0&72&144&288&576&864&1440&2088&3024&3888&5904\\
\hline
$\hS_\aa(t)$&0&0&0&1&2&4&8&12&20&29&42&54&82\\
\hline
$R_\aa(t)$&72&144&288&504&720&1152&1512&2160&2448&3816&3960&5544&6264\\
\hline
$r_\aa(t)$	&1&2&4&7&10&16&21&30&34&53&55&77&87
\end{tabular}
\]

\subsubsection{Alternative methods:  Direct counting and direct computation}
\label{sa3alt}

We verified our formulas by computing $S_\aa(t)$ for $t \leq 100$ through direct enumeration of normal squares.  The results agree with those computed by expanding the generating function. 

We also applied Proposition \ref{T:snsemimagic} to derive a formula, independent of all other methods, by which we calculated numbers (which we are not describing; see the ``Six Little Squares'' Web page \cite{Maplefiles}) that allowed us to find the 840 constituents by interpolation.  These interpolated constituents fully agreed with the ones given above.

\section{Magilatin squares of order 3} \label{magilatin}

A magilatin square is like a semimagic square except that entries may be equal if they are in different rows and columns.  The inside-out polytope is the same as with semimagic squares except that we omit those hyperplanes that prevent equality of entries in different rows and columns.  Thus, in our count of reduced squares, we have to count the fractional lattice points in some of the faces of the polytope.

The reduced normal form of a magilatin square is the same as that of a semimagic square except that the restrictions are weaker.  It might be thought that this would introduce ambiguity into the standard form because the minimum can occur in several cells, but it turns out that it does not.

\begin{prop}  \label{T:snmagiclatin}
A reduced, normal $3\times3$ magilatin square has the form \eqref{E:reducednormal} with the restrictions
\begin{equation}
\label{E:snmagilatinrange}
\begin{aligned}
 0 &< \beta, \gamma ;    \\
 0 &\leq \alpha ;    \\
 0 &\leq \delta \leq \beta ;
\end{aligned} 
\end{equation}
and \eqref{E:reducednormrestr}.
Each reduced square with $w$ in the upper right corner corresponds to exactly $t-w-1$ different magilatin squares with entries in the range $(0,t)$, for $0<w<t$. Each reduced square with magic sum $s$ corresponds to one magilatin square with magic sum equal to $t$, if $t \equiv s \mod{3}$, and none otherwise, for $0<s<t$.
\end{prop}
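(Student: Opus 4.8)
The plan is to follow the proof of Proposition~\ref{T:snsemimagic} closely, replacing its appeal to full distinctness by the weaker magilatin requirement that entries be distinct only within each row and within each column, and to settle the one new issue — that the minimum entry of a magilatin square may occupy more than one cell — by a short case check.

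First I would normalize. By permuting rows and columns we can place a minimum entry in the $(1,1)$ position and make the first row and first column increasing; since the three entries of a line are necessarily distinct, these orderings are strict and, once the $(1,1)$ cell has been chosen, they determine the row and column permutations uniquely. Transposing if necessary we can also impose $x_{12}\le x_{21}$, and subtracting the least entry makes $x_{11}=0$. Solving the (unchanged) row-sum and column-sum equations subject to $x_{11}=0$ writes the array in the shape \eqref{E:reducednormal} for unique real $\alpha,\beta,\gamma,\delta$, with $\beta=x_{12}$, $\alpha=x_{21}-x_{12}$, $\delta=x_{23}$ and $\gamma=x_{13}-2x_{21}+x_{12}$; a one-line comparison ($x_{13}-x_{31}=\alpha$, $x_{13}-x_{22}=\alpha+\delta$, and the remaining differences clearly positive) shows $x_{13}=w:=2\alpha+\beta+\gamma$ is the largest entry. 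Conversely, any array of the form \eqref{E:reducednormal} satisfying \eqref{E:snmagilatinrange} has all entries nonnegative with $x_{11}=0$, so — granted \eqref{E:reducednormrestr} — it is a reduced magilatin square, and it is in normal form once the no-ambiguity point below is settled.

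Next I would read off the restrictions. The range \eqref{E:snmagilatinrange} records exactly the non-redundant normalization inequalities: $\beta>0$ and $\gamma>0$ from $x_{11}\ne x_{12}$ and $x_{21}\ne x_{31}$, $\alpha\ge0$ from $x_{12}\le x_{21}$, and $0\le\delta\le\beta$ from $x_{23}\ge0$ and $x_{33}\ge0$. The three strict inequalities of the semimagic range \eqref{E:reducednormrange} that disappear — $\alpha>0$, $\delta>0$, $\delta<\beta$ — are precisely the inequations $x_{13}\ne x_{31}$, $x_{23}\ne x_{11}$, $x_{33}\ne x_{11}$, each between cells lying in different rows \emph{and} different columns, hence no longer required. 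For \eqref{E:reducednormrestr} I would run through the eighteen pairs of entries sharing a row or a column: the six in the first row and first column are automatically distinct, several others (for instance $x_{31}\ne x_{32}$ and $x_{13}\ne x_{23}$) are forced by \eqref{E:snmagilatinrange} alone, and the remaining ones give exactly the seven inequations \eqref{E:reducednormrestr} — the same list as for semimagic, since every one of them is a within-row or within-column condition. The only place where the weaker range matters is the second row: $x_{21}=x_{23}$ would mean $\delta=\alpha+\beta$, which by \eqref{E:snmagilatinrange} can hold only if $\alpha=0$ and $\delta=\beta$, a corner already excluded by $\delta\ne\beta-\alpha$; so no extra restriction is needed and \eqref{E:reducednormrestr} is complete.

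The genuinely new point — and, although elementary, the part I expect to require the most care — is the absence of ambiguity in the normal form when $0$ is attained more than once. From \eqref{E:reducednormal} this can happen only at the cells $(2,3)$ (if $\delta=0$), $(3,3)$ (if $\delta=\beta$) or $(3,2)$ (if $\alpha=\delta=0$); for each case I would run the normalization with that cell moved to $(1,1)$, observe that making the new first row and first column increasing pins down the permutations, and check that the outcome either already equals \eqref{E:reducednormal} or fails $x_{12}\le x_{21}$, so that one transpose returns it to \eqref{E:reducednormal}. Hence the normal form is well defined. Finally, the stated multiplicities are just the translation $x_{ij}\mapsto x_{ij}+\ell$: since the entries of \eqref{E:reducednormal} lie in $[0,w]$, the translate has all entries in $(0,t)$ exactly for $\ell=1,\dots,t-w-1$, and every magilatin square with entries in $(0,t)$ is recovered from a unique reduced square by subtracting its smallest entry; in the affine count the translate by $\ell$ raises the magic sum from $s$ to $s+3\ell$, so magic sum $t$ is reached for the single value $\ell=(t-s)/3$, which is a positive integer precisely when $0<s<t$ and $t\equiv s\mod 3$.
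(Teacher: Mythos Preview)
Your proposal is correct and follows essentially the same route as the paper's own proof: mimic the semimagic argument of Proposition~\ref{T:snsemimagic}, relaxing to weak inequalities precisely those three conditions ($\alpha>0$, $\delta>0$, $\delta<\beta$) that compared cells in different rows and different columns, and observing that the seven inequations \eqref{E:reducednormrestr} survive unchanged because each comes from a within-line pair. Your verification that $x_{21}\ne x_{23}$ is already forced by $\delta\ne\beta-\alpha$ at the boundary case $\alpha=0$, $\delta=\beta$ is exactly the kind of check needed to confirm no new inequations appear.

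Where you go beyond the paper is in the well-definedness paragraph. The paper remarks just before the proposition that the possible multiplicity of minimum cells ``might be thought'' to introduce ambiguity ``but it turns out that it does not,'' and then its proof says nothing further about it; the subsequent case analysis of stabilizers $\fF$ implicitly relies on uniqueness of the normal form but does not derive it. Your plan---for each of the cells $(2,3)$, $(3,3)$, $(3,2)$ that can be zero, renormalize from that cell and check the result is (after at most one transpose) the original \eqref{E:reducednormal}---is a clean way to fill this gap, and it does go through in each case. So your argument is the paper's argument, made honest at the one point the paper waves its hands.
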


\begin{proof}
The proof is similar to that for semimagic squares; we can arrange the square by permuting rows and columns and by reflection in the main diagonal so that $x_{11}$ is the smallest entry, the first row and column are each increasing, and $x_{21} \geq x_{12}$.  We cannot say $x_{21} > x_{12}$ because entries that do not share a row or column may be equal.  Still, we obtain the form \eqref{E:reducednormal} with the bounds \eqref{E:snmagilatinrange} and the same inequations \eqref{E:reducednormrestr} as in semimagic because all the latter depend on having no two equal values in the same line (row or column).
\end{proof}

Each reduced, normal magilatin square gives rise to a family of true magilatin squares by adding a positive constant to each entry and by symmetries, which are generated by row and column permutations and reflection in the main diagonal.  Call the set of symmetries $\fG$.  
As with semimagic, $|\fG| = 2 (3!)^2 = 72$.  
Each normal, reduced square $S$ gives rise to $|\fG/\fF|=72/|\fF|$ squares via symmetries, where $\fF$ is the stabilizer subgroup of $S$.  If all entries are distinct, then the square is semimagic, $\fF$ is trivial, and everything is as with semimagic squares.  However, if $\alpha=0$ or $\delta=0$ or $\delta=\beta$, the stabilizer is nontrivial.  We consider each case in turn.

\emph{The case $\alpha=0<\delta$.}
Here $\delta<\beta$ because no line can repeat a value.  To fix the square we cannot permute any rows or columns but we can reflect in the main diagonal, so $|\fF|=2$.  Moreover, \eqref{E:reducednormrestr} reduces to
$$
\delta \neq \gamma, \frac{\beta}{2}, \frac{\beta+\gamma}{2} .
$$
We are in $OBC$, the $x=0$ facet of $Q$, with the induced arrangement of three lines, $\cI^{x=0}$.  
The number of reduced magilatin squares of this kind is $36r_{OBC}(t)$, where $r_{OBC}(t)$ is the number of $\frac1t$-lattice points in the open facet and, equivalently, the number of symmetry types of reduced magilatin squares of this kind.  We apply Equations \eqref{E:ehrrecipgf}--\eqref{E:ehrhypgf} to the intersection poset $\cL(OBC^\circ,\cI^{x=0})$, which is found in Figure \ref{F:ml3intersections}.  The M\"obius function $\mu(OBC,u)$ equals $(-1)^{\codim u}$.

\emph{The case $\delta=0<\alpha$.}
In this case a nontrivial member of $\fF$ can only exchange the two zero positions.  Such a symmetry that preserves the increase of the first row and column is unique (as one can easily see); thus $|\fF|=2$.  Furthermore, \eqref{E:reducednormrestr} reduces to
$$
\alpha \neq \beta .
$$
We are in $OAB$, the $z=0$ facet, with the induced arrangement $\cI^{z=0}$ of one line.  The number of reduced magilatin squares of this kind is $36r_{OAB}(t)$, where $r_{OAB}(t)$ is the number of $\frac1t$-lattice points in the open facet and, equivalently, the number of symmetry types of reduced squares.  We apply Equations \eqref{E:ehrrecipgf}--\eqref{E:ehrhypgf} to the intersection poset $\cL(OAB^\circ,\cI^{z=0})$, shown in Figure \ref{F:ml3intersections}.  The M\"obius function $\mu(OAB,u)$ equals $(-1)^{\codim u}$.

\emph{The case $\delta=\beta$.}
Here we must have $\alpha>0$.  There are two zero positions in opposite corners.  A symmetry that exchanges them and preserves increase in the first row and column is uniquely determined, so $|\fF|=2$.  The inequations reduce to 
$$
\beta \neq \gamma, \alpha+\gamma .
$$
We are in $OAC$, the facet where $y=z$, with the two-line induced arrangement $\cI^{y=z}$.  The number of reduced magilatin squares of this kind is $36r_{OAC}(t)$, where $r_{OAC}(t)$ is the number of $\frac1t$-lattice points in the open facet, equally the number of reduced symmetry types.  We apply Equations \eqref{E:ehrrecipgf}--\eqref{E:ehrhypgf} to the intersection poset $\cL(OAC^\circ,\cI^{y=z})$ in Figure \ref{F:ml3intersections}.  The M\"obius function $\mu(OAC,u)$ equals $(-1)^{\codim u}$.

\emph{The case $\alpha=0=\delta$.}
In these squares there are three zero positions and the whole square is a cyclic latin square.  Any symmetry that fixes the zero positions also fixes the rest of the square.  There are $3!$ symmetries that permute the zero positions, generated by row and column permutations.  They all preserve the entire square.  Therefore $|\fF|=6$.  The inequations disappear.  We are in the edge $OB$, which is the face where $x=z=0$, with the empty arrangement, $\cI^{x=z=0}=\eset$.  The number of reduced magilatin squares of this kind is $12r_{OB}(t)$, where $r_{OB}(t)$ is the number of $\frac1t$-lattice points in the open edge, also the number of reduced symmetry types.  The intersection poset $\cL(OB^\circ,\eset)$ consists of the one element $OB$, whose M\"obius function $\mu(OB,OB)=1$.

To get the intersection posets we may examine Tables \ref{Tb:planeedge} and \ref{Tb:linepolytope} to find the edges and vertices of $(Q^\circ,\cI)$ in each closed facet.  We also need to know which vertex is in which edge; this is easy.  Although we do not need the fourth facet, $ABC$, we include it for the interest of its more complicated geometry.
\begin{figure}
\begin{center}
\psfrag{LOAB}[c]{{\large$\cL(OAB^\circ,\cI^{z=0})$}}
\psfrag{LOAC}[c]{{\large$\cL(OAC^\circ,\cI^{y=z})$}}
\psfrag{LOBC}[c]{{\large$\cL(OBC^\circ,\cI^{x=0})$}}
\psfrag{LABC}[c]{{\large$\cL(AB C^\circ,\cI^{x+y=1/2})$}}
\psfrag{OAB}[l]{$OAB$}
\psfrag{OAC}[l]{$OAC$}
\psfrag{OBC}[l]{$OBC$}
\psfrag{ABC}[l]{$ABC$}
\psfrag{OE}[l]{$OE$}
\psfrag{AD}[l]{$AD$}
\psfrag{DE'}[l]{$DE'$}
\psfrag{OE''}[l]{$OE''$}
\psfrag{BD}[l]{$BD$}
\psfrag{DE''}[l]{$DE''$}
\psfrag{EE''}[l]{$EE''$}
\psfrag{BE'}[l]{$BE'$}
\psfrag{AE''}[l]{$AE''$}
\psfrag{CE}[l]{$CE$}
\psfrag{E'E''}[l]{$E'E''$}
\psfrag{F}[l]{$F$}
\psfrag{G}[l]{$G$}
\psfrag{G'}[l]{$G'$}
\psfrag{G''}[l]{$G''$}
\includegraphics{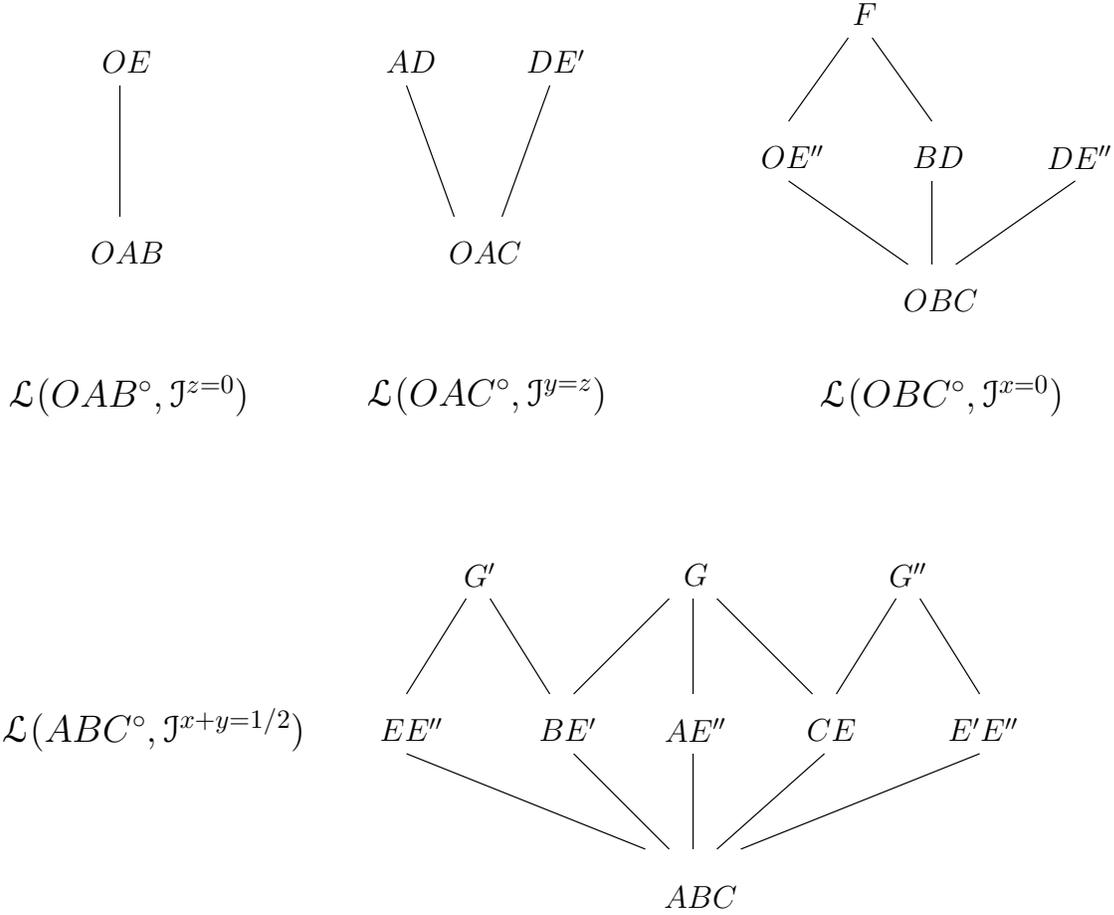}
\caption{The four facet intersection posets of $(Q^\circ,\cI)$.}
\label{F:ml3intersections}
\end{center}
\end{figure}

Of course all the functions $r_\sss$ and $R_\ml$ depend on whether we are counting cubically or affinely (thus, subscripted $\cc$ or $\aa$); the two types will be treated separately.  But the general conclusions hold that
\begin{equation} \label{E:ml3gfreduced}
\bR_\ml(x) 
= 72\br_\sss(x) + 36[ \br_{OAB}(x) + \br_{OAC}(x) + \br_{OBC}(x) ] + 12\br_{OB}(x) ,
\end{equation}
and for the number of reduced symmetry types, $\hR_\ml(t)$ with generating function $\hbR_\ml(x)$, 
\begin{equation} \label{E:ml3symgfreduced}
\hbR_\ml(x) 
= \br_\sss(x) + \br_{OAB}(x) + \br_{OAC}(x) + \br_{OBC}(x) + \br_{OB}(x) ,
\end{equation}
where $\br_\sss(x)$ is from semimagic and, by Equation \eqref{E:ehrhypgf} since $|\mu(X,Y)|=1$ for every lower interval in each facet poset (except facet $ABC$, which we do not use), 
\begin{equation} \label{E:ml3supernormfacets}
\begin{aligned}
(-1)^3 \br_{OAB}(1/x) &= \bE_{OAB}(x) + \bE_{OE}(x) , \\
(-1)^3 \br_{OAC}(1/x) &= \bE_{OAC}(x) + \bE_{AD}(x) + \bE_{DE'}(x) , \\
(-1)^3 \br_{OBC}(1/x) &= \bE_{OBC}(x) + \bE_{OE''}(x) + \bE_{BD}(x) + \bE_{DE''}(x) + \bE_{F}(x) , \\
(-1)^2 \br_{OB}(1/x) &= \bE_{OB}(x) ;
\end{aligned}
\end{equation}
the sign and reciprocal on the left result from Equation \eqref{E:gfmobius}.

There is also the generating function of the number of cubical or affine symmetry classes, $\hL(t)$, whose generating function is $\bhL(x)$.  This is obtained from $\hR_\ml(t)$ in the same way as $L(t)$ is from $R_\ml(t)$, the exact way depending on whether the count is affine or cubic.

\subsection{Magilatin squares:  Cubical count (by upper bound)} \label{mlc3}

The weak quasipolynomial is exactly as in the semimagic cubical problem.

\subsubsection{Magilatin squares by upper bound}
\label{mlc3results}

The number of $3\times3$ magilatin squares with strict upper bound $t$ is $L_\cc(t)$.  We count them via $R_{\mlc}(w)$, the number of reduced magilatin squares with largest entry (which we know to be $x_{13}$) equal to $w$.  The formula is
\begin{equation} 
\label{E:mlc3count}
L_\cc(t) = \sum_{w=0}^{t-1} (t-1-w) R_{\mlc}(w) .
\end{equation}
Equivalently, $R_{\mlc}(w)$ counts $\frac{1}{w}$-integral points in the interior and part of the boundary of the inside-out polytope $(Q_\cc,\cI_\cc)$ of Section \ref{sc3}, weighted variably by $72/|\fF|$.

Now we must calculate the closed Ehrhart generating function for each necessary face.  This is done by {\tt LattE}; here are the results.  
First, $OAB_\cc$:
\begin{equation}
\label{E:mlc3sngfoab}
\begin{aligned}
(-1)^3 \br_{OAB_\cc}(1/x) =\ &\bE_{OAB_\cc}(x) + \bE_{OE_\cc}(x) \\
=\ &\frac{1}{ (1-x)^2 (1-x^2) } + \frac{1}{(1-x) (1-x)^3 }	\\
=\ & \frac{ x+2 }{ (1-x) (1-x^2) (1-x^3) } \ .
\end{aligned}
\end{equation}
Next is $OAC_\cc$:
\begin{equation}
\label{E:mlc3sngfoac}
\begin{aligned}
(-1)^3 \br_{OAC_\cc}(1/x) =\ &\bE_{OAC_\cc}(x) + \bE_{AD_\cc}(x) 
+ \bE_{D_\cc E_\cc'}(x) \\
=\ &\frac{1}{(1-x)^2 (1-x^2) } + \frac{1}{(1-x^2)^2 } + \frac{1}{(1-x^2) (1-x^3) }	\\
=\ & \frac{ x^2 + 2x + 3 }{ (1-x^2)^2 (1-x^3)  } \ .
\end{aligned}
\end{equation}
The last facet is $OB_\cc C_\cc$:
\begin{equation}
\label{E:mlc3sngfobc}
\begin{aligned}
(-1)^3 \br_{OB_\cc C_\cc}(1/x) =\ &\bE_{OB_\cc C_\cc}(x) + \bE_{OE_\cc''}(x) + \bE_{B_\cc D_\cc}(x) + \bE_{D_\cc E_\cc''}(x) + \bE_{F_\cc}(x) \\[4pt]
=\ &\frac{1}{(1-x)^3 } + \frac{1}{(1-x) (1-x^2) } + \frac{1}{(1-x) (1-x^2) }	\\[4pt]
&+ \frac{2x^2+1}{(1-x^2)^2 } + \frac{1}{1-x^3}	\\[4pt]
=\ & \frac{ -2x^5 + 4x^2 + 5x + 5 }{ (1-x^2)^2 (1-x^3)  } \ .
\end{aligned}
\end{equation}
Finally, the edge $OB_\cc$:
\begin{equation}
\label{E:mlc3sngfob}
(-1)^2 \br_{OB_\cc}(1/x) = \bE_{OB_\cc}(x) = \frac{1}{(1-x)^2 } \ .
\end{equation}

Now $\bR_{\mlc}(x)$ results from \eqref{E:ml3gfreduced}, and then from \eqref{E:mlc3count} we see that
\begin{equation}\label{E:mlc3gf}
\begin{aligned} 
&\quad \bL_\cc(x) = \frac{x^2}{(1-x)^2} \, \bR_{\mlc}(x) = \\[4pt]
&\frac{ 
 12 x^4 \left\{ \begin{aligned} 
 &79 x^{15} + 190 x^{14} + 260 x^{13} + 250 x^{12} + 211 x^{11} + 179 x^{10} + 181 x^9 \\
 &+ 198 x^8 + 210 x^7 + 181 x^6 + 125 x^5 + 61 x^4 + 22 x^3 + 8 x^2 + 4 x + 1 
 \end{aligned} \right\}
}{ 
(1-x^4) (1-x^5) (1-x^3)^2 (1-x^2)^2
} \ .
\end{aligned} 
\end{equation}

The constituents of $L_\cc$ are extracted as described in Section \ref{genmethod}, and here they are:
\begin{equation}\label{E:mlc-constituents}
L_\cc(t) = \begin{cases}
 \ds \frac{3}{10} t^5 - \frac{51}{8} t^4  + \frac{202}{3} t^3  -\frac{3769}{10} t^2 + c_1(t) t - c_0(t),	&\text{if $t$ is even}; \\[14pt]
 \ds \frac{3}{10} t^5 - \frac{51}{8} t^4 + \frac{202}{3} t^3  -\frac{7493}{20} t^2 + c_1(t) t - c_0(t),	&\text{if $t$ is odd};
\end{cases}
\end{equation}
where $c_1$ varies with period 6, given by 
$$
c_1(t) = \begin{cases}
994,		&\text{ if } t \equiv 0,2 \\
986,		&\text{ if } t \equiv 4  \\
\frac{1909}{2}\,, 	&\text{ if } t \equiv 1  \\
\frac{1925}{2}\,, 	&\text{ if } t \equiv 3, 5 \\
\end{cases} \qquad \mod{6};
$$
and $c_0$, given by Table \ref{Tb:magilatincubc0}, varies with period 60. 
\begin{table}[tbh]
\newcommand\Strut{\rule{0ex}{2.5ex}}
\begin{tabular}{|r|c||r|c||r|c||r|c||r|c|}

\hline
\Strut $t$	& $c_0(t)$	&$t$	& $c_0(t)$	&$t$	& $c_0(t)$	&$t$	& $c_0(t)$	&$t$	& $c_0(t)$	\\[3pt]
\hline\hline
\Strut 0&948			&12&948			&24&$\frac{4884}{5}$	&36&$\frac{4452}{5}$	&48&$\frac{4884}{5}$	\\[3pt]
\hline
\Strut 1&$\frac{76933}{120}$	&13&$\frac{87301}{120}$	&25&$\frac{16769}{24}$	&37&$\frac{16769}{24}$	&49&$\frac{87301}{120}$ \\[3pt]
\hline
\Strut 2&$\frac{2780}{3}$	&14&$\frac{14332}{15}$	&26&$\frac{13036}{15}$	&38&$\frac{14332}{15}$	&50&$\frac{2780}{3}$	\\[3pt]
\hline
\Strut 3&$\frac{35607}{40}$	&15&$\frac{6891}{8}$	&27&$\frac{6891}{8}$	&39&$\frac{35607}{40}$	&51&$\frac{32151}{40}$	\\[3pt]
\hline
\Strut 4&$\frac{13292}{15}$	&16&$\frac{11996}{15}$	&28&$\frac{13292}{15}$	&40&$\frac{2572}{3}$	&52&$\frac{2572}{3}$	\\[3pt]
\hline
\Strut 5&$\frac{18433}{24}$	&17&$\frac{18433}{24}$	&29&$\frac{95621}{120}$	&41&$\frac{85253}{120}$	&53&$\frac{95621}{120}$ \\[3pt]
\hline
\Strut 6&$\frac{4452}{5}$	&18&$\frac{4884}{5}$	&30&948			&42&948			&54&$\frac{4884}{5}$	\\[3pt]
\hline
\Strut 7&$\frac{18497}{24}$	&19&$\frac{95941}{120}$	&31&$\frac{85573}{120}$	&43&$\frac{95941}{120}$	&55&$\frac{18497}{24}$	\\[3pt]
\hline
\Strut 8&$\frac{14332}{15}$	&20&$\frac{2780}{3}$	&32&$\frac{2780}{3}$	&44&$\frac{14332}{15}$	&56&$\frac{13036}{15}$	\\[3pt]
\hline
\Strut 9&$\frac{32727}{40}$	&21&$\frac{29271}{40}$	&33&$\frac{32727}{40}$	&45&$\frac{6315}{8}$	&57&$\frac{6315}{8}$	\\[3pt]
\hline
\Strut 10&$\frac{2572}{3}$	&22&$\frac{2572}{3}$	&34&$\frac{13292}{15}$	&46&$\frac{11996}{15}$	&58&$\frac{13292}{15}$	\\[3pt]
\hline
\Strut 11&$\frac{93893}{120}$	&23&$\frac{104261}{120}$&35&$\frac{20161}{24}$	&47&$\frac{20161}{24}$	&59&$\frac{104261}{120}$ \\[3pt]
\hline
\end{tabular}
\vspace{15pt}
\caption{Constant terms of the constituents of $L_\cc(t)$, counting all magilatin squares by upper bound.}
\label{Tb:magilatincubc0}
\end{table}
Thus the period of $L_\cc$ turns out to be 60, just like that of $S_{\cc}$ (not a surprise). 
However, again as with $S_{\cc}$, the even constant terms have half the period of the odd constant terms.  That means $L_\cc(2t)$ has period equal to half the denominator of the corresponding inside-out polytope $(2P,\cH)$.  We have no explanation for this.

The principal constituent, that for $t\equiv0 \mod{60}$, is 
$$
\frac{3}{10} t^5 - \frac{51}{8} t^4  + \frac{202}{3} t^3  -\frac{3769}{10} t^2 + 994 t - 948 .
$$
The constant term for magilatin squares does not have the simple interpretation as a number of linear orderings that it does for magic and semimagic squares, because the entries in the square need not all be different.  (Still, there is an interpretation as a number of partial orderings of the nine cells; see Theorem 4.1 in our general magic and magilatin paper \cite{MML}, and recall that an acyclic orientation of a graph can be represented by a partial ordering of the vertices.)   

We confirmed the formulas by generating all magilatin squares and comparing the count with the coefficients of $\bL_\cc(x)$ up to $t = 91$.

\subsubsection{Symmetry types of magilatin squares, counted by upper bound}
\label{mlc3symresults}

The number of symmetry types with strict bound $t$ is $\hL_\cc(t)$.  We count them via $\hR_{\mlc}(w)$, given by Equation \eqref{E:ml3symgfreduced}; then 
\begin{equation} 
\label{E:mlc3symcount}
\hL_\cc(t) = \sum_{w=0}^{t-1} (t-1-w) \, \hR_{\mlc}(w) .
\end{equation}
Equivalently, $\hR_{\mlc}(w)$ counts the $\frac{1}{w}$-integral points in the interior and part of the boundary of the inside-out polytope $(Q_\cc,\cI_\cc)$ of Section \ref{sc3}.

We get $\hbR_{\mlc}(x)$ from \eqref{E:ml3symgfreduced}; then from \eqref{E:mlc3symcount} we see that
\begin{equation}\label{E:mlc3gfsym}
\begin{aligned} 
&\hbL_\cc(x) = \frac{x^2}{(1-x)^2} \, \hbR_{\mlc}(x) = \\[4pt]
&\quad \frac{x^4 \left\{ \begin{aligned}
9x^{15}&+20x^{14}+23x^{13}+16x^{12}+10x^{11}+13x^{10}+27x^{9}+43x^{8}\\
&+54x^{7}+52x^{6}+41x^{5}+25x^{4}+14x^{3}+8x^{2}+4x+1
\end{aligned} \right\}
}{ 
(1-x^2)^2 (1-x^3)^2 (1-x^4) (1-x^5)
} \ .
\end{aligned}
\end{equation}

The constituents of $\hL_\cc$ are:
$$
\hL_\cc(t) = \begin{cases}
\ds \frac{1}{240} t^5 - \frac{3}{64} t^4  + \frac{97}{216} t^3  -\frac{2029}{720} t^2 + \hat c_1(t) t - \hat c_0(t),	&\text{if $t$ is even}; \\ \\
\ds \frac{1}{240} t^5 - \frac{3}{64} t^4  + \frac{97}{216} t^3  -\frac{4013}{1440} t^2 + \hat c_1(t) t - \hat c_0(t),	&\text{if $t$ is odd};
\end{cases}
$$
where $\hat c_1$ varies with period 6, given by 
$$
\hat c_1(t) = \begin{cases}
\frac{17}{2}\,,		&\text{ if } t \equiv 0, 2 \\
\frac{151}{18}\,,		&\text{ if } t \equiv 4  \\
\frac{1163}{144}\,,	&\text{ if } t \equiv 1  \\
\frac{131}{16}\,,		&\text{ if } t \equiv 3, 5 \\
\end{cases} \qquad \mod{6},
$$
and $\hat c_0$, given by Table \ref{Tb:magilatincubsymc0}, varies with period 60.
\begin{table}[htb]
\newcommand\Strut{\rule{0ex}{2.5ex}}
\begin{tabular}{|r|c||r|c||r|c||r|c||r|c|}

\hline
\Strut $t$	& $\hat c_0(t)$	&$t$	& $\hat c_0(t)$	&$t$	& $\hat c_0(t)$	&$t$	& $\hat c_0(t)$	&$t$	& $\hat c_0(t)$	\\[3pt]
\hline\hline
\Strut 0&9			&12&9			&24&$\frac{47}{5}$	&36&$\frac{41}{5}$	&48&$\frac{47}{5}$	\\[3pt]
\hline
\Strut 1&$\frac{49213}{8640}$	&13&$\frac{59581}{8640}$&25&$\frac{11225}{1728}$&37&$\frac{11225}{1728}$&49&$\frac{59581}{8640}$ \\[3pt]
\hline
\Strut 2&$\frac{235}{27}$	&14&$\frac{1229}{135}$	&26&$\frac{1067}{135}$	&38&$\frac{1229}{135}$	&50&$\frac{235}{27}$	\\[3pt]
\hline
\Strut 3&$\frac{2823}{320}$	&15&$\frac{539}{64}$	&27&$\frac{539}{64}$	&39&$\frac{2823}{320}$	&51&$\frac{2439}{320}$	\\[3pt]
\hline
\Strut 4&$\frac{1144}{135}$	&16&$\frac{982}{135}$	&28&$\frac{1144}{135}$	&40&$\frac{218}{27}$	&52&$\frac{218}{27}$	\\[3pt]
\hline
\Strut 5&$\frac{12313}{1728}$	&17&$\frac{12313}{1728}$&29&$\frac{65021}{8640}$&41&$\frac{54653}{8640}$&53&$\frac{65021}{8640}$ \\[3pt]
\hline
\Strut 6&$\frac{41}{5}$		&18&$\frac{47}{5}$	&30&9			&42&9			&54&$\frac{47}{5}$	\\[3pt]
\hline
\Strut 7&$\frac{12953}{1728}$	&19&$\frac{68221}{8640}$&31&$\frac{57853}{8640}$&43&$\frac{68221}{8640}$&55&$\frac{12953}{1728}$ \\[3pt]
\hline
\Strut 8&$\frac{1229}{135}$	&20&$\frac{235}{27}$	&32&$\frac{235}{27}$	&44&$\frac{1229}{135}$	&56&$\frac{1067}{135}$	\\[3pt]
\hline
\Strut 9&$\frac{2503}{320}$	&21&$\frac{2119}{320}$	&33&$\frac{2503}{320}$	&45&$\frac{475}{64}$	&57&$\frac{475}{64}$	\\[3pt]
\hline
\Strut 10&$\frac{218}{27}$	&22&$\frac{218}{27}$	&34&$\frac{1144}{135}$	&46&$\frac{982}{135}$	&58&$\frac{1144}{135}$	\\[3pt]
\hline
\Strut 11&$\frac{63293}{8640}$	&23&$\frac{73661}{8640}$&35&$\frac{14041}{1728}$&47&$\frac{14041}{1728}$&59&$\frac{73661}{8640}$ \\[3pt]
\hline
\end{tabular}
\vspace{15pt}
\caption{Constant terms of the constituents of $\hL_\cc(t)$, counting symmetry types of magilatin squares by upper bound.}
\label{Tb:magilatincubsymc0}
\end{table}
Thus the period of $\hL_\cc$ turns out to be 60.  As with $S_{\cc}$ and $L_\cc$, the period of the even constant terms is half that of the odd constant terms.

The principal constituent of $\hL_\cc$ is 
$$
\frac{1}{240} t^5 - \frac{3}{64} t^4  + \frac{97}{216} t^3  -\frac{2029}{720} t^2 + \frac{17}{2} t - 9 .
$$

\subsubsection{Some real numbers}
\label{mlc3numbers}

For the first several nonzero values of the numbers of magilatin squares and of symmetry types, consult this table:
\[
\begin{tabular}{r||c|c|c|c|c|c|c|c|c|c|c|c|c|c|c|c|c|}
$t$     	&4&5&6&7&8&9&10&11&12&13&14&15\\
\hline \hline
$L_\cc(t)$&12&48&120&384&1068&2472&4896&9072&15516&25608&40296&61608\\
\hline
$\hL_\cc(t)$    &1&4&10&24&53&106&191&328&528&822&1230&1794\\
\hline
$R_{\mlc}(t)$    &12&24&36&192&420&720&1020&1752&2268&3648&4596&6624\\
\hline
$r_{\mlc}(t)$    &1&2&3&8&15&24&32&52&63&94&114&156\\
\end{tabular}
\]
The third line contains the number of symmetry classes of $3\times3$ magilatin squares, counted by upper bound.  The main numbers, $L_\cc(t)$ and $\hL_\cc(t)$, are sequences A173548 and A173729 in the OEIS \cite{OEIS}.  The reduced numbers, $R_\mlc(t)$ and $\hR_\mlc(t)$, are sequences A174018 and A174019.  In contrast to the semimagic case, the number of squares is not a simple multiple of the number of symmetry types.

\subsection{Magilatin squares:  Affine count (by magic sum)} \label{mla3}

The last example is $3\times3$ magilatin squares, counted affinely.  Let $L_\aa(t)$ be the number of $3\times3$ magilatin squares with magic sum $t>0$.

The weak quasipolynomial is the same as in affine semimagic.

\subsubsection{Magilatin squares by magic sum}
 \label{mla3results}

We compute $L_\aa(t)$, the number of squares with magic sum $t$, via $R_{\mla}(s)$, the number of reduced squares with magic sum $s$.  The formula is
\begin{align} 
\label{E:mla3count}
L_\aa(t) &= 
\sum_{\substack{0 < s \leq t-3 \\ s \equiv t \mod{3}}} R_{\mla}(s) &\text{ if } t>0 .
\end{align}
Equivalently, $R_{\mla}(s)$ counts $\frac{1}{s}$-integral points in the interior and part of the boundary of the inside-out polytope $(Q_\aa,\cI_\aa)$ of Section \ref{sa3}, each weighted by $72/|\fF|$.

Now we calculate (by {\tt LattE}) the closed Ehrhart generating function for each necessary face.  First, $OAB_\aa$:
\begin{equation}
\label{E:mla3sngfoab}
\begin{aligned}
(-1)^3 \br_{OAB_\aa}(1/x) =\ &\bE_{OAB_\aa}(x) + \bE_{OE_\aa}(x) \\
=\ &\frac{1}{(1-x) (1-x^2)^2 } + \frac{1}{(1-x) (1-x^4) }	\\
=\ & \frac{ 2 }{ (1-x) (1-x^2) (1-x^4) } \ .
\end{aligned}
\end{equation}
Next is $OAC_\aa$:
\begin{equation}
\label{E:mla3sngfoac}
\begin{aligned}
(-1)^3 \br_{OAC_\aa}(1/x) =\ &\bE_{OAC_\aa}(x) + \bE_{AD_\aa}(x) 
+ \bE_{D_\aa E_\aa'}(x) \\
=\ &\frac{1}{(1-x) (1-x^2)^2 } + \frac{1}{(1-x^2) (1-x^3) } + \frac{1}{(1-x^3) (1-x^4) }	\\
=\ & \frac{ x^3 + x^2 + x + 3 }{ (1-x^2) (1-x^3) (1-x^4) } \ .
\end{aligned}
\end{equation}
The last facet is $OB_\aa C_\aa$:
\begin{equation}
\label{E:mla3sngfobc}
\begin{aligned}
(-1)^3 \br_{OB_\aa C_\aa}(1/x) =\ &\bE_{OB_\aa C_\aa}(x) + \bE_{OE_\aa''}(x) + \bE_{B_\aa D_\aa}(x) \\
&+ \bE_{D_\aa E_\aa''}(x) + \bE_{F_\aa}(x) \\
=\ &\frac{1}{ (1-x) (1-x^2)^2 } + \frac{1}{ (1-x) (1-x^4) } + \frac{1}{ (1-x^2) (1-x^3) }	\\
&+ \frac{1}{ (1-x^3) (1-x^4) } + \frac{1}{1-x^5}	\\[2pt]
=\ &\frac{x^5+4x^4+6x^3+7x^2+7x+5}{(1-x^5) (1-x^3) (1-x^4) (1+x) } \ .
\end{aligned}
\end{equation}
Finally, the edge $OB_\aa$:
\begin{equation}
\label{E:mla3sngfob}
\begin{aligned}
(-1)^2 \br_{OB_\aa}(1/x) = \bE_{OB_\aa}(x) 
= \frac{1}{(1-x) (1-x^2) } \ .
\end{aligned}
\end{equation}

Now we get $\bR_{\mla}(x)$ from \eqref{E:ml3gfreduced}; then by \eqref{E:mla3count} we deduce that
\begin{equation}\label{E:mla3gf}
\begin{aligned}
&\bL_\aa(x) = \frac{x^3}{1-x^3} \, \bR_{\mla}(x) \\[4pt]
&= \frac{
12x^6 \left\{ \begin{aligned}
&1+3x+7x^2+15x^3+33x^4+65x^5+128x^6+208x^7+316x^8\\
&+434x^9+566x^{10}+676x^{11}+784x^{12}+852x^{13}+911x^{14}+936x^{15}\\
&+967x^{16}+967x^{17}+1001x^{18}+995x^{19}+1000x^{20}+955x^{21}\\
&+893x^{22}+752x^{23}+624x^{24}+456x^{25}+322x^{26}+174x^{27}+79x^{28}
\end{aligned} \right\}
}{
(1+x) (1+x+x^2) (1+x^2) (1-x^3) (1-x^5) (1-x^6) (1-x^7) (1-x^8) 
} \ .
\end{aligned}
\end{equation}

The constituents of $L_\aa$ are the following:  
\begin{equation}\label{E:mla-constituents}
L_\aa(t) = \frac{1}{8} t^4 - 3 t^3  + a_2(t) t^2 - a_1(t) t + a_0(t)	- 72 S_7(t) ,
\end{equation}
where $a_2$ varies with period 6, given by 
$$
a_2(t) = \begin{cases}
 \frac{151}{4}\,,	&\text{ if } t \equiv 0 \\
 \frac{135}{4}\,,	&\text{ if } t \equiv 2, 4  \\
 \frac{63}{2}\,,	&\text{ if } t \equiv 1, 5  \\
 \frac{71}{2}\,,	&\text{ if } t \equiv 3 \\
\end{cases} \qquad \mod{6};
$$
the linear coefficient varies with period 12, given by 
$$
a_1(t) = \begin{cases}
 \frac{1296}{5}\,,		&\text{ if } t \equiv 0 \\
 \frac{1347}{10}\,,	&\text{ if } t \equiv 1,5 \\
 \frac{831}{5}\,,		&\text{ if } t \equiv 2,10 \\
 \frac{2097}{10}\,,	&\text{ if } t \equiv 3 \\
 \frac{876}{5}\,,		&\text{ if } t \equiv 4,8 \\
 \frac{1251}{5}\,,		&\text{ if } t \equiv 6 \\
 \frac{1257}{10}\,,	&\text{ if } t \equiv 7,11 \\
 \frac{2187}{10}\,,	&\text{ if } t \equiv 9 \\
\end{cases} \qquad \mod{12};
$$
the constant term $a_0$, given by Table \ref{Tb:magilatinaffc0}, varies with period 120; and $S_7$ is as in the affine semimagic count. 

\begin{table}[htb]
\newcommand\Strut{\rule{0ex}{2.5ex}}
\begin{tabular}{|r|c||r|c||r|c||r|c||r|c||r|c|}
\hline
\Strut $t$	& $a_0(t)$	&$t$	& $a_0(t)$	&$t$	& $a_0(t)$	&$t$	& $a_0(t)$	&$t$	& $a_0(t)$	&$t$	& $a_0(t)$	\\[3pt]
\hline\hline
\Strut 0&876			&20&340			&40&400			&60&840			&80&376			&100&364		\\[3pt]
\hline
\Strut 1&$\frac{4243}{40}$	&21&$\frac{21843}{40}$	&41&$\frac{3283}{40}$	&61&$\frac{5683}{40}$ 	&81&$\frac{20403}{40}$	&101&$\frac{4723}{40}$	\\[3pt]
\hline
\Strut 2&$\frac{1097}{5}$	&22&$\frac{1037}{5}$	&42&$\frac{3597}{5}$	&62&$\frac{917}{5}$	&82&$\frac{1217}{5}$	&102&$\frac{3417}{5}$	\\[3pt]
\hline
\Strut 3&$\frac{15219}{40}$	&23&$\frac{-461}{40}$	&43&$\frac{-941}{40}$	&63&$\frac{16659}{40}$	&83&$\frac{-1901}{40}$	&103&$\frac{499}{40}$	\\[3pt]
\hline
\Strut 4&$\frac{1604}{5}$	&24&$\frac{4164}{5}$	&44&$\frac{1484}{5}$	&64&$\frac{1784}{5}$	&84&$\frac{3984}{5}$	&104&$\frac{1664}{5}$	\\[3pt]
\hline
\Strut 5&$\frac{1463}{8}$	&25&$\frac{1367}{8}$	&45&$\frac{4887}{8}$	&65&$\frac{1175}{8}$	&85&$\frac{1655}{8}$	&105&$\frac{4599}{8}$ 	\\[3pt]
\hline
\Strut 6&$\frac{3201}{5}$	&26&$\frac{881}{5}$	&46&$\frac{821}{5}$	&66&$\frac{3381}{5}$	&86&$\frac{701}{5}$	&106&$\frac{1001}{5}$	\\[3pt]
\hline
\Strut 7&$\frac{3091}{40}$	&27&$\frac{17811}{40}$	&47&$\frac{2131}{40}$	&67&$\frac{1651}{40}$	&87&$\frac{19251}{40}$	&107&$\frac{691}{40}$	\\[3pt]
\hline
\Strut 8&$\frac{1448}{5}$	&28&$\frac{1388}{5}$	&48&$\frac{3948}{5}$	&68&$\frac{1268}{5}$	&88&$\frac{1568}{5}$	&108&$\frac{3768}{5}$	\\[3pt]
\hline
\Strut 9&$\frac{21267}{40}$	&29&$\frac{5587}{40}$	&49&$\frac{5107}{40}$	&69&$\frac{22707}{40}$	&89&$\frac{4147}{40}$	&109&$\frac{6547}{40}$	\\[3pt]
\hline
\Strut 10&265			&30&705			&50&241			&70&229			&90&741			&110&205		\\[3pt]
\hline
\Strut 11&$\frac{-1037}{40}$	&31&$\frac{1363}{40}$	&51&$\frac{16083}{40}$	&71&$\frac{403}{40}$	&91&$\frac{-77}{40}$	&111&$\frac{17523}{40}$	\\[3pt]
\hline
\Strut 12&$\frac{4092}{5}$	&32&$\frac{1772}{5}$	&52&$\frac{1712}{5}$	&72&$\frac{4272}{5}$	&92&$\frac{1592}{5}$	&112&$\frac{1892}{5}$	\\[3pt]
\hline
\Strut 13&$\frac{4819}{40}$	&33&$\frac{19539}{40}$	&53&$\frac{3859}{40}$	&73&$\frac{3379}{40}$	&93&$\frac{20979}{40}$	&113&$\frac{2419}{40}$ 	\\[3pt]
\hline
\Strut 14&$\frac{809}{5}$	&34&$\frac{1109}{5}$	&54&$\frac{3309}{5}$	&74&$\frac{989}{5}$	&94&$\frac{929}{5}$	&114&$\frac{3489}{5}$	\\[3pt]
\hline
\Strut 15&$\frac{4023}{8}$	&35&$\frac{311}{8}$	&55&$\frac{791}{8}$	&75&$\frac{3735}{8}$	&95&$\frac{599}{8}$	&115&$\frac{503}{8}$	\\[3pt]
\hline
\Strut 16&$\frac{1676}{5}$	&36&$\frac{3876}{5}$	&56&$\frac{1556}{5}$	&76&$\frac{1496}{5}$	&96&$\frac{4056}{5}$	&116&$\frac{1376}{5}$	\\[3pt]
\hline
\Strut 17&$\frac{5011}{40}$	&37&$\frac{7411}{40}$	&57&$\frac{22131}{40}$	&77&$\frac{6451}{40}$	&97&$\frac{5971}{40}$	&117&$\frac{23571}{40}$	\\[3pt]
\hline
\Strut 18&$\frac{3273}{5}$	&38&$\frac{593}{5}$	&58&$\frac{893}{5}$	&78&$\frac{3093}{5}$	&98&$\frac{773}{5}$	&118&$\frac{713}{5}$	\\[3pt]
\hline
\Strut 19&$\frac{787}{40}$	&39&$\frac{18387}{40}$	&59&$\frac{-173}{40}$	&79&$\frac{2227}{40}$	&99&$\frac{16947}{40}$	&119&$\frac{1267}{40}$	\\[3pt]
\hline
\end{tabular}
\vspace{15pt}
\caption{Constant terms of the truncated constituents of $L_\aa(t)$, counting magilatin squares by magic sum.}
\label{Tb:magilatinaffc0}
\end{table}

The period of $L_\aa$ turns out to be 840---the period of the constant terms, due to the combination of $a_0(t)$ and the constant term of $S_7(t)$.  This is equal to the denominator.  

The principal constituent of $L_\aa$, that is, for $t\equiv0 \mod{840}$, is 
$$
\frac{1}{8} t^4 - 3 t^3 + \frac{151}{4} t^2 - \frac{9192}{35} t + 948 
$$
(incorporating the $-72S_7$ term).  
(As with the cubical magilatin count, there is an interpretation of the constant term 948 in terms of partial orderings; see Theorem 4.7 in our general paper \cite{MML}.)  

We verified the results by comparing an actual count of magilatin squares with magic sum $t \leq 100$ to the coefficients in $\bL_\aa$.

\subsubsection{Symmetry types of magilatin squares, counted by magic sum}
 \label{mla3symresults}

We compute $\hL_\aa(t)$, the number of symmetry types of squares with magic sum $t$, via $\hR_{\mla}(s)$, the number of reduced symmetry types with magic sum $s$.  The formula is
\begin{align} 
\label{E:mla3symcount}
\hL_\aa(t) &= \sum_{\substack{0 < s \leq t-3 \\ s \equiv t \mod{3}}} \hR_{\mla}(s) &\text{ if } t>0 .
\end{align}
Equivalently, $\hR_{\mla}(s)$ counts $\frac{1}{s}$-integral points in the interior and part of the boundary of the inside-out polytope$(Q_\aa,\cI_\aa)$ of Section \ref{sa3}.

 From \eqref{E:ml3gfreduced} we get $\hbR_{\mla}(x)$ and then from \eqref{E:mla3symcount} we see that
\begin{equation}\label{E:mla3symgf}
\begin{aligned} 
&\hbL_\aa(x) = \frac{x^3}{1-x^3} \, \hbR_{\mla}(x) \\[4pt]
&= \frac{
x^6 \left\{ \begin{aligned} 
&1+3x+7x^2+13x^3+23x^4+37x^5+60x^6+86x^7+118x^8+149x^9\\
&+180x^{10}+199x^{11}+212x^{12}+208x^{13}+196x^{14}+171x^{15}\\
&+145x^{16}+115x^{17}+96x^{18}+79x^{19}+72x^{20}+67x^{21}\\
&+66x^{22}+59x^{23}+54x^{24}+43x^{25}+33x^{26}+19x^{27}+9x^{28}
\end{aligned} \right\}
}{
(1+x) (1+x+x^2) (1+x^2) (1-x^3) (1-x^5) (1-x^6) (1-x^7) (1-x^8) 
} \ .
\end{aligned}
\end{equation}

The constituents of $\hL_\aa$ are:                 
 $$
 \hL_\aa(t) = \frac{1}{576} t^4 - \frac{1}{48} t^3 + \hat a_2(t) t^2 - \hat a_1(t) t + \hat a_0(t) - S_7(t) , 
 $$
 where the quadratic term varies with period 6, given by 
 $$
 \hat a_2(t) = \begin{cases}
 \frac{25}{96}\,,		&\text{ if } t \equiv 0 \\
 \frac{25}{144}\,,		&\text{ if } t \equiv 1,5 \\
 \frac{59}{288}\,,		&\text{ if } t \equiv 2,4 \\
 \frac{11}{48}\,,		&\text{ if } t \equiv 3 \\
\end{cases} \qquad \mod{6};
 $$
 the linear term varies with period 12, given by 
 $$
 \hat a_1(t) = \begin{cases}
 \frac{31}{15}\,,		&\text{ if } t \equiv 0 \\
 \frac{103}{120}\,,		&\text{ if } t \equiv 1,5 \\
 \frac{133}{120}\,,		&\text{ if } t \equiv 2,10 \\
 \frac{47}{30}\,,		&\text{ if } t \equiv 3 \\
 \frac{37}{30}\,,		&\text{ if } t \equiv 4,8 \\
 \frac{233}{120}\,,		&\text{ if } t \equiv 6 \\
 \frac{11}{15}\,,		&\text{ if } t \equiv 7,11 \\
 \frac{203}{120}\,,		&\text{ if } t \equiv 9 \\
\end{cases} \qquad \mod{12};
 $$
and $\hat a_0$, given by Table \ref{Tb:magilatinaffsymc0}, varies with period 120.

\begin{table}[htb]
\newcommand\Strut{\rule{0ex}{2.5ex}}
\begin{tabular}{|r|c||r|c||r|c||r|c||r|c||r|c|}
\hline
\Strut $t$	& $\hat a_0(t)$	&$t$	& $\hat a_0(t)$	&$t$	& $\hat a_0(t)$	&$t$	& $\hat a_0(t)$	&$t$	& $\hat a_0(t)$	&$t$	& $\hat a_0(t)$	\\[3pt]
\hline\hline
\Strut 0&8			&20&$\frac{47}{18}$	&40&$\frac{31}{9}$	&60&$\frac{15}{2}$	&80&$\frac{28}{9}$	&100&$\frac{53}{18}$	\\[3pt]
\hline
\Strut 1&$\frac{2027}{2880}$	&21&$\frac{1523}{320}$	&41&$\frac{1067}{2880}$	&61&$\frac{3467}{2880}$ &81&$\frac{1363}{320}$	&101&$\frac{2507}{2880}$\\[3pt]
\hline
\Strut 2&$\frac{553}{360}$	&22&$\frac{493}{360}$	&42&$\frac{257}{40}$	&62&$\frac{373}{360}$	&82&$\frac{673}{360}$	&102&$\frac{237}{40}$	\\[3pt]
\hline
\Strut 3&$\frac{979}{320}$	&23&$\frac{-949}{2880}$	&43&$\frac{-1429}{2880}$&63&$\frac{1139}{320}$	&83&$\frac{-2389}{2880}$&103&$\frac{11}{2880}$	\\[3pt]
\hline
\Strut 4&$\frac{229}{90}$	&24&$\frac{38}{5}$	&44&$\frac{199}{90}$	&64&$\frac{137}{45}$	&84&$\frac{71}{10}$	&104&$\frac{122}{45}$	\\[3pt]
\hline
\Strut 5&$\frac{847}{576}$	&25&$\frac{751}{576}$	&45&$\frac{343}{64}$	&65&$\frac{559}{576}$	&85&$\frac{1039}{576}$	&105&$\frac{311}{64}$ 	\\[3pt]
\hline
\Strut 6&$\frac{221}{40}$	&26&$\frac{409}{360}$	&46&$\frac{349}{360}$	&66&$\frac{241}{40}$	&86&$\frac{229}{360}$	&106&$\frac{529}{360}$	\\[3pt]
\hline
\Strut 7&$\frac{1739}{2880}$	&27&$\frac{1171}{320}$	&47&$\frac{779}{2880}$	&67&$\frac{299}{2880}$	&87&$\frac{1331}{320}$	&107&$\frac{-661}{2880}$\\[3pt]
\hline
\Strut 8&$\frac{104}{45}$	&28&$\frac{193}{90}$	&48&$\frac{36}{5}$	&68&$\frac{163}{90}$	&88&$\frac{119}{45}$	&108&$\frac{67}{10}$	\\[3pt]
\hline
\Strut 9&$\frac{1427}{320}$	&29&$\frac{3083}{2880}$	&49&$\frac{2603}{2880}$	&69&$\frac{1587}{320}$	&89&$\frac{1643}{2880}$	&109&$\frac{4043}{2880}$\\[3pt]
\hline
\Strut 10&$\frac{149}{72}$	&30&$\frac{49}{8}$	&50&$\frac{125}{72}$	&70&$\frac{113}{72}$	&90&$\frac{53}{8}$	&110&$\frac{89}{72}$	\\[3pt]
\hline
\Strut 11&$\frac{-1813}{2880}$	&31&$\frac{587}{2880}$	&51&$\frac{1043}{320}$	&71&$\frac{-373}{2880}$	&91&$\frac{-853}{2880}$	&111&$\frac{1203}{320}$	\\[3pt]
\hline
\Strut 12&$\frac{73}{10}$	&32&$\frac{131}{45}$	&52&$\frac{247}{90}$	&72&$\frac{39}{5}$	&92&$\frac{217}{90}$	&112&$\frac{146}{45}$	\\[3pt]
\hline
\Strut 13&$\frac{2891}{2880}$	&33&$\frac{1299}{320}$	&53&$\frac{1931}{2880}$	&73&$\frac{1451}{2880}$	&93&$\frac{1459}{320}$	&113&$\frac{491}{2880}$ \\[3pt]
\hline
\Strut 14&$\frac{301}{360}$	&34&$\frac{601}{360}$	&54&$\frac{229}{40}$	&74&$\frac{481}{360}$	&94&$\frac{421}{360}$	&114&$\frac{249}{40}$	\\[3pt]
\hline
\Strut 15&$\frac{279}{64}$	&35&$\frac{-17}{576}$	&55&$\frac{463}{576}$	&75&$\frac{247}{64}$	&95&$\frac{271}{576}$	&115&$\frac{175}{576}$	\\[3pt]
\hline
\Strut 16&$\frac{128}{45}$	&36&$\frac{69}{10}$	&56&$\frac{113}{45}$	&76&$\frac{211}{90}$	&96&$\frac{37}{5}$	&116&$\frac{181}{90}$	\\[3pt]
\hline
\Strut 17&$\frac{2219}{2880}$	&37&$\frac{4619}{2880}$	&57&$\frac{1491}{320}$	&77&$\frac{3659}{2880}$	&97&$\frac{3179}{2880}$	&117&$\frac{1651}{320}$	\\[3pt]
\hline
\Strut 18&$\frac{233}{40}$	&38&$\frac{157}{360}$	&58&$\frac{457}{360}$	&78&$\frac{213}{40}$	&98&$\frac{337}{360}$	&118&$\frac{277}{360}$	\\[3pt]
\hline
\Strut 19&$\frac{-277}{2880}$	&39&$\frac{1267}{320}$	&59&$\frac{-1237}{2880}$&79&$\frac{1163}{2880}$	&99&$\frac{1107}{320}$	&119&$\frac{203}{2880}$	\\[3pt]
\hline
\end{tabular}
\vspace{15pt}
\caption{Constant terms of the truncated constituents of $\hL_\aa(t)$, the number of symmetry types of magilatin squares with given magic sum.}
\label{Tb:magilatinaffsymc0}
\end{table}

The period of $\hL_\aa$ is 840.  The principal constituent, that for $t\equiv0 \mod{840}$, is 
$$
\frac{1}{576} t^4 - \frac{1}{48} t^3 + \frac{25}{96} t^2 - \frac{74}{35} t + 9 .
$$
(This incorporates the $-S_7$ term.)

\subsubsection{Some numbers}
\label{mla3numbers}

The first several nonzero values are given in the table.  The third line gives the number of symmetry classes of squares.
\[
\begin{tabular}{r||c|c|c|c|c|c|c|c|c|c|c|c|c|c|c|}
$t$     	&6&7&8&9&10&11&12&13&14&15&16&17&18&19\\
\hline\hline
$L_\aa(t)$	&12&12&24&72&156&240&552&600&1020&1548&2004&2568&4008&4644\\
\hline
$\hL_\aa(t)$    &1&1&2&4&7&10&20&22&35&50&63&78&116&131\\
\hline
$R_{\mla}(t)$    &12& 12&24&60&144&216&480&444&780&996&1404&1548&2640&3696\\
\hline
$r_{\mla}(t)$    &1&1&2&3&6&8&16&15&25&30&41&43&66&68\\
\end{tabular}
\]
The sequences $L_\aa(t)$ and $\hL_\aa(t)$ are A173549 and A173730 in the OEIS \cite{OEIS}.  The numbers $R_\mla(t)$ of reduced magilatin squares and $\hR_\mla(t)$ of normalized, reduced squares with largest value $t$ are sequences A174020 and A174021.

\section{Observations and conjectures} \label{questions}

A remarkable fact is that the period of every one of our strong Ehrhart quasipolynomials equals the denominator, when it could be much smaller.

For some small values of $t$ we calculated $M_\aa(t)=E_\PoH^\circ(t)$ by hand, which is feasible because the problem is 2-dimensional.  The process of counting lattice points in a diagram drew our attention to some remarkable phenomena that apply to the semimagic and magilatin problems as well.  Let $\delta := \dim s$; let $c_k$ be the coefficient in the quasipolynomial $E_\PoH^\circ(t)$ and let $c_k^w$ be that in the Ehrhart quasipolynomial of $P$, and let $p_k$, $p_k^w$ be their periods.  We observe that the variation in $c_{\delta-1}$ is exactly the same as that in $c_{\delta-1}^w$, i.e.,
$$
c_{\delta-1}(t) - c_{\delta-1}(t-1) = c_{\delta-1}^w(t) - c_{\delta-1}^w(t-1) ,
$$
but that is not so for most lower coefficients, especially $c_0$.  The reason is that adding each new excluded hyperplane results in a constant deduction in degree $\delta-1$ (as we discussed at Equation (4.9) in our first article \cite{IOP}) but a more irregular one in lower terms.  We observe that $p_k$ increases---that is, there is longer-term variation in $c_k$---as $k$ decreases in every case.  Thus we propose some daring conjectures.  

\begin{conj} \label{Cj:periods}
In an inside-out counting problem, let $\delta := \dim P$.
 \begin{enumerate}
 \item[(a)] $p_k^w \mid p_k$ for $0\leq k \leq \delta$.  (We know that $p_{\delta-1} = p_{\delta-1}^w$ because $c_{\delta-1}$ and $c_{\delta-1}^w$ have the same variation.)
 \item[(b)] If $p_j = p_j^w$ for all $j \geq k$, then the variation in $c_k$ is the same as that in $c_k^w$.
 \item[(c)] The period ratios increase by a multiplicative factor as $k$ decreases:
 $$
 \frac{p_k}{p_k^w} \ \Big| \ \frac{p_{k-1}}{p_{k-1}^w} \quad \text{for } 0\leq k \leq \delta .
 $$
\end{enumerate}
We do not suggest $p_k \mid p_{k+1}$ because that is false in general in ordinary Ehrhart theory, according to McAllister and Woods \cite{McAllWoods}.  However, it might be true for the kinds of inside-out polytopes that arise in cubical and affine counting.
 \end{conj}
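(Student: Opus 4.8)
The plan is to peel the hyperplanes off one at a time and track how each Ehrhart coefficient, and its period, changes. Order $\cH=\{h_1,\dots,h_m\}$, set $\cH_i:=\{h_1,\dots,h_i\}$, and write $q^{(i)}_k$ for the period of the degree-$k$ coefficient of $E^\circ_{P^\circ,\cH_i}$, so that $q^{(0)}_k=p_k^w$ (since $\cH_0=\emptyset$ and $E_{P^\circ}(t)=(-1)^{\dim P}E_P(-t)$ by \eqref{E:ehrrecipgf}) and $q^{(m)}_k=p_k$. Passing from $\cH_{i-1}$ to $\cH_i$ deletes precisely the $\tfrac1t$-lattice points of $P^\circ\setminus\bigcup\cH_{i-1}$ that lie on $h_i$; because $\cH$ is transverse to $P$, these are exactly the valid points of the open inside-out polytope $\big((h_i\cap P)^\circ,\ \cH_{i-1}\cap h_i\big)$, of dimension $\delta-1$. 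Hence
\[
E^\circ_{P^\circ,\cH_i}(t)\;=\;E^\circ_{P^\circ,\cH_{i-1}}(t)\;-\;E^\circ_{(h_i\cap P)^\circ,\ \cH_{i-1}\cap h_i}(t),
\]
the inside-out refinement of the deletion step behind Equation (4.9) of \cite{IOP}. Since the subtracted term has degree $\delta-1$, the volume coefficient $c_\delta$ is untouched ($p_\delta=p_\delta^w=1$) and $c_{\delta-1}$ changes only by the constant $\vol(h_i\cap P)$; iterating over $i$ reproves $p_{\delta-1}=p_{\delta-1}^w$.

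For the general statement I would run a double induction: an outer induction on $i$, and within each step an inner induction on $k$ running from $\delta$ downward. The content of part (a) is that at each step the ``new'' quasipolynomial added to $c_k$ has period equal to a multiple of $p_k^w$, so that $p_k^w$ survives the least common multiple that defines $q^{(i)}_k$. The natural tool is M\"obius inversion \eqref{E:oehrhypgf} over the intersection poset of $(h_i\cap P,\ \cH_{i-1}\cap h_i)$: the new contribution is a signed combination of the $t^k$-coefficients of Ehrhart quasipolynomials of flats of $h_i\cap P$, and one wants to show that the denominators of those flats always refine the denominators already present for $P$. Part (c) would then follow by composing these divisibilities dimension by dimension: a hyperplane can enlarge the ratio $p_k/p_k^w$ only by a factor that is inherited, one dimension lower, by $p_{k-1}/p_{k-1}^w$.

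The step I expect to be the main obstacle --- and the reason the statement is only a conjecture --- is controlling cancellation. A priori the signed M\"obius sum that produces $c_k$ can collapse to a quasipolynomial of strictly smaller period than the least common multiple of the periods of its summands, and nothing obvious forces the ``volume-type'' period $p_k^w$ to be immune to such a collapse. Ruling this out would require handling the M\"obius values $\mu(\hat0,u)$ of the intersection poset \emph{simultaneously} with the arithmetic of the vertex coordinates of $(P,\cH)$ --- exactly the interaction that makes the explicit $3\times3$ computations of Sections~\ref{magic}--\ref{magilatin} delicate. A reasonable first target would be part (a) for $k=0$ (the constant term, i.e.\ the number of order types), and part (b) under the extra hypothesis that $\cH$ meets $P$ in general position, since then the intersection poset is a truncated Boolean lattice, every M\"obius value is $\pm1$, and the bookkeeping becomes tractable.
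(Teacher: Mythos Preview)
This statement is a \emph{conjecture} in the paper, not a theorem: the authors explicitly introduce it with ``we propose some daring conjectures'' and offer no proof. There is therefore no proof in the paper to compare your attempt against. The only piece the paper does justify is the parenthetical remark that $p_{\delta-1}=p_{\delta-1}^w$, for exactly the reason you give (each hyperplane subtracts a constant at degree $\delta-1$, cf.\ their reference to Equation~(4.9) of \cite{IOP}); your reproduction of that argument is fine.

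Your proposal is honest about its status: it is a strategy, not a proof, and you correctly locate the obstruction. Two remarks on the strategy itself. First, the inductive step as stated is slightly too optimistic even as a heuristic: you write that it would suffice to show each new contribution to $c_k$ has period a \emph{multiple} of $p_k^w$, so that ``$p_k^w$ survives the least common multiple.'' But that is not enough --- if $c_k^w$ has exact period $p_k^w$ and you subtract something whose period is also a multiple of $p_k^w$, the difference can have strictly smaller period (take any nonconstant periodic function minus itself). So even the target statement of your induction needs to be sharpened before the cancellation issue can be attacked. Second, your suggested warm-up case (general-position $\cH$, Boolean intersection lattice) is sensible and would already be a nontrivial result, but note that none of the paper's own examples are in general position --- the semimagic intersection poset of Figure~\ref{F:s3intersections} has the triple line $l_{356}$ --- so a general-position theorem would not directly explain the phenomena that motivated the conjecture.
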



\bigskip\hrule\bigskip

(Concerned with sequences A108576--A108579, A173546--A173549, A173723--A173730, A174018--A174021, A174256--A174257.)

\bigskip\hrule\bigskip

\end{document}